\newcommand{\R}{\mathbb{R}}
\newcommand{\N}{\mathbb{N}}
\newtheorem{theorem}{Theorem}
\newtheorem{corollary}{Corollary}
\newtheorem{lem}{Lemma}[section]
\newtheorem{proposition}{Proposition}
\newtheorem{remark}{Remark}
\title{Upper bounds for $s$-distance sets and equiangular lines}
\author{Alexey Glazyrin\thanks{School of Mathematical \& Statistical Sciences,
         The University of Texas Rio Grande Valley, USA.}\ \ and Wei-Hsuan Yu\thanks{Mathematics Department, Michigan State University, USA.}}
\date{\today}
\begin{document}

\maketitle
\begin{abstract}

The set of points in a metric space is called an $s$-distance set if pairwise distances between these points admit only $s$ distinct values. Two-distance spherical sets with the set of scalar products $\{\alpha, -\alpha\}$, $\alpha\in[0,1)$, are called equiangular. The problem of determining the maximum size of $s$-distance sets in various spaces has a long history in mathematics. We suggest a new method of bounding the size of an $s$-distance set in compact two-point homogeneous spaces via zonal spherical functions. This method allows us to prove that the maximum size of a spherical two-distance set in $\mathbb{R}^n$, $n\geq 7$, is $\frac{n(n+1)}2$ with possible exceptions for some $n=(2k+1)^2-3$, $k \in \mathbb{N}$. We also prove the universal upper bound $\sim \frac 2 3 n a^2$ for equiangular sets with $\alpha=\frac 1 a$  and, employing this bound, prove a new upper bound on the size of equiangular sets in all dimensions. Finally, we classify all equiangular sets reaching this new bound.
\end{abstract}

\section{Introduction}

An $s$-distance set in a metric space $M$ is a finite set of points with exactly $s$ distinct pairwise distances. A number of classical problems can be formulated in terms of finding $s$-distance sets of maximum size. For example, the Ray-Chaudhuri--Wilson theorem \cite{ray75} gives an upper bound of $\binom{n}{s}$ for $s$-distance sets in the Johnson space $J^{n,w}$ (the space of subsets of an $n$-element set with exactly $w$ elements). A special case of this problem corresponds to the celebrated Erd\H{o}s-Ko-Rado theorem \cite{erd61,wil84}. There is extensive literature devoted to generalizations and improvements of these results \cite{dez78, dez83, fra81, fra86, alo91, bab95, ahl96, fu99, tan12, god15}.

In the Euclidean case $\mathbb{R}^n$, problems of finding upper bounds on the maximum size of $s$-distance sets are generally of two types: the number of distinct distances $s$ is very large compared to $n$, and the number of distinct distances is comparable to $n$. For the former case, the most well-studied problem is the famous Erd\H{o}s distinct distances problem for the plane \cite{erd46} which was essentially solved by Guth and Katz \cite{gut15} who proved that the size of an $s$-distance set on the plane is $O(s \ln s)$. In the latter case, the upper bound $\binom{n+s}{s}$ was given in \cite{ban83}. There is a natural example for each $s\leq n$ with the same asymptotic order as this bound: the set of $\binom {n+1}{s}$ centers of all $s$-faces of a regular simplex. Distances between the centers are defined only by how many common vertices their faces have. The study of two-distance sets was initiated in \cite{ein66}. Apart from the natural example described above, very little is known about lower bounds \cite{lis97}.

The above example works in the spherical case as well since all the face centers are equidistant from the center of the circumsphere. The first upper bounds for the maximum size of an $s$-distance set on the unit sphere $\mathbb{S}^{n-1}$ were found in \cite{del77} (the so-called \textit{harmonic bounds}, see Theorem \ref{thm:s-dist}). If an $s$-distance set attains the harmonic bound, then it forms a so-called tight spherical $2s$-design (see the survey paper \cite{ban09}). Several maximum spherical $s$-distance sets are known to form beautiful and important configurations on the unit sphere. For instance, the maximum $3$-distance set in $\R^3$ is the icosahedron. The maximum $3$-distance sets in $\R^8$ is coming from the $E_8$ root system. It provides the solution to the kissing number problem in $\R^8$ \cite{lev79,odl79} and also, as it has been recently shown, generates an optimal sphere packing in $\R^8$ \cite{via16}. Maximum spherical $s$-distance sets also usually offer a solution for energy minimization problems on sphere. The table in \cite{coh07} of \textit{universal optimal codes}, minimizers of a large class of energy functions on the sphere, shows that most of them are maximum spherical $s$-distance sets.

Denote by $g(n)$ the maximum size of a two-distance set in $\mathbb{S}^{n-1}$. The harmonic bound gives $g(n)\leq \frac {n(n+3)}{2}$. This bound is attained on the regular pentagon for $n=2$. The results of \cite{del77} also imply that the harmonic bound can be reached for $n\geq 3$ only if $n=a^2-3$, where $a$ is an odd number. In fact for $a=3$ and $5$ two-distance sets with $27$ and $275$ points respectively do exist \cite{lin66,con69}. It follows from \cite{ban04,neb12} that this is not the case for infinitely many other values of $a$: $a$ cannot be equal to $7, 9, 13, 21, 45, 61, 69, 77,$~etc. In \cite{mus09} it was shown that $g(n)=\frac {n(n+1)}{2}$ for $7\leq n\leq 39$ except for $n=22,23$. The set of the values of $n$, where the natural lower bound is in fact optimal, was extended to almost all $n\leq 93$ in \cite{bar13} and almost all $n\leq 417$ in \cite{yu16f}.

As one of the main results of this paper we show that this pattern holds for all $n$.

\begin{restatable}{theorem}{twodist} \label{thm:main}
$$g(n)= \frac {n(n+1)}2 \text{ for all $n\geq 7$ with possible exceptions for some $n = (2k+1)^2-3$, $k \in \N$.}$$
\end{restatable}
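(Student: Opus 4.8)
The plan is to combine the classical harmonic (linear programming) bound with a new averaging argument based on zonal spherical functions, reducing the problem to a sharp inequality that rules out all $n \ge 7$ except possibly those of the form $(2k+1)^2-3$. Recall that a spherical two-distance set $X \subset \mathbb{S}^{n-1}$ with inner products $\{\alpha,\beta\}$ has $|X| \le \frac{n(n+3)}{2}$ by the harmonic bound, with equality forcing a tight $4$-design; and such equality can occur only when $n = a^2 - 3$ for odd $a$. So the task is really to improve $\frac{n(n+3)}{2}$ down to $\frac{n(n+1)}{2}$ whenever $n$ is \emph{not} of that special form. First I would set up the Gegenbauer/zonal function machinery: for a candidate set $X$ of size $m = \frac{n(n+1)}{2}+1$, one writes the matrix $G_{ij} = G_k(\langle x_i, x_j\rangle)$ for a suitable positive-definite zonal kernel $G_k$ (or a nonnegative combination of low-degree Gegenbauer polynomials $C_k^{(n/2-1)}$), and uses the fact that $\sum_{i,j} c_k G_k(\langle x_i,x_j\rangle) \ge 0$ together with the rank bound $\dim(\text{Harm}_k) $ on the number of linearly independent such Gram matrices.

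The key steps, in order, would be: (1) express the constraint that $X$ is a two-distance set as the statement that $f(t) := (t-\alpha)(t-\beta)$ vanishes on all off-diagonal inner products, hence $\sum_{i \ne j} p(\langle x_i, x_j\rangle) = 0$ for any polynomial $p$ divisible by $f$; (2) choose an auxiliary polynomial — a multiple of $f(t)$ plus a carefully tuned remainder — whose Gegenbauer expansion has controlled signs, so that the positive-semidefiniteness of the associated Gram matrices yields an inequality of the form $m \le \frac{n(n+1)}{2} + (\text{error depending on } \alpha,\beta,n)$; (3) analyze when the error term can be $\ge 1$: this should happen only in a narrow regime of $(\alpha,\beta)$, which by the standard integrality/eigenvalue constraints on two-distance sets (the "switching"/graph-eigenvalue argument, cf. the Lisoněk/Musin analyses) pins $n+2\beta^{-2}$ or a similar quantity to be a perfect square, i.e. forces $n = a^2-3$; (4) handle the finitely many small or boundary dimensions ($7 \le n \le 417$ or wherever prior results leave off) by citing \cite{mus09,bar13,yu16f}, so the new argument only needs to cover large $n$ where the asymptotics are clean.

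I expect the main obstacle to be step (2)–(3): engineering a single polynomial (or a short family of them) whose sign pattern in the Gegenbauer basis is good enough to squeeze the bound all the way from $\frac{n(n+3)}{2}$ to $\frac{n(n+1)}{2}+1$ \emph{uniformly in the unknown parameters $\alpha,\beta$}, rather than just for a fixed favorable relation between them. The usual harmonic bound "wastes" roughly $n$ units, and recovering them requires exploiting the zonal-function positivity not just at degree $\le 2$ but playing off the degree-$3$ (or higher) terms against the two known linear constraints ($\sum_j 1$ and $\sum_j \langle x_i,x_j\rangle$-type identities), while simultaneously using that $\alpha+\beta$ and $\alpha\beta$ are rational with bounded denominators. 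The delicate point is that the exceptional set $n=(2k+1)^2-3$ must emerge naturally as exactly the locus where this optimization is tight — so the estimates in steps (2) and (3) have to be essentially best-possible, not merely sufficient, which is where the real work (and the restriction to $n \ge 7$) will lie.
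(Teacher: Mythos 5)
Your plan has a genuine gap: the central mechanism you propose --- a single auxiliary polynomial whose Gegenbauer expansion squeezes the LP bound from $\frac{n(n+3)}{2}$ down to $\frac{n(n+1)}{2}$ \emph{uniformly in the unknown inner products} $\alpha,\beta$ --- is exactly the approach of the prior work you cite (\cite{mus09,bar13,yu16f}), and it is known to stall: those papers had to optimize the polynomial separately for each $(\alpha,\beta)$ (via SDP in later iterations) and only reached $n\le 417$. You acknowledge in your last paragraph that engineering this polynomial is ``where the real work will lie,'' but you do not supply it, and there is no evidence that a fixed low-degree kernel with the required sign pattern exists for all admissible $(\alpha,\beta)$ simultaneously. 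Your step (3) is also off target: the exceptional dimensions $n=(2k+1)^2-3$ do not arise in the paper from integrality constraints on the two-distance set itself, but from a comparison of bounds for \emph{equiangular} sets (see below).

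The paper takes a different route that sidesteps the uniform-polynomial problem entirely. It splits by the sign of $\alpha+\beta$. When $\alpha+\beta\ge 0$, Musin's rank argument (Lemma \ref{lem:musin}) already gives $|X|\le\frac{n(n+1)}{2}$ with no LP optimization. When $\alpha+\beta<0$, the set lifts (by appending an orthogonal coordinate $tx_i+\sqrt{1-t^2}\,y$ and tuning $t$) to an equiangular set of the same size in $\mathbb{S}^{n}$, so $g(n)$ in this regime is controlled by $M(n+1)$. The real engine is then Theorem \ref{thm:equiang}, a new bound $M(n+1)\le\frac{(a^2-2)(a^2-1)}{2}$ proved via derived sets and an eigenvalue-multiplicity argument (Theorem \ref{thm:general}); this is $\le\frac{n(n+1)}{2}$ except precisely when $n+1=a^2-2$ for odd $a$, i.e.\ $n=(2k+1)^2-3$, which is where the exceptional set comes from. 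Small $n$ ($n<359$) are handled by citing \cite{kin16} and \cite{yu16f}, as you anticipated. Without the reduction to equiangular sets and the new equiangular bound, your outline does not close.
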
 

Finding maximum $s$-distance sets is sometimes equivalent to constructing tight spherical $t$-designs for $t=2s$ \cite{del77}. For $s=2$, the only unknown cases in Theorem \ref{thm:main} are exactly the long-standing open problems for the classification of tight spherical $4$-designs.

A particular case of two-distance spherical sets, namely equiangular sets, attracts a lot of interest as it arises in various areas of mathematics. For instance, there is an extensive literature in frame theory devoted to equiangular tight frames \cite{str03, hol04, sus07}.

By an equiangular spherical set we will mean a two-distance spherical set with scalar products $\alpha$ and $-\alpha$. The natural question is to determine the maximum cardinality $M(n)$ of an equiangular set in $\mathbb{S}^{n-1}$. This problem was posed in \cite{lin66} where a few values of $M(n)$ were determined. The general upper bound $\binom{n+1}{2}$ is due to Gerzon \cite{lem73} (see Theorem \ref{thm:gerzon}).

It is not hard to show the connection between equiangular sets attaining Gerzon's bound and two-distance sets reaching the harmonic bound. For each equiangular set with $\frac {n(n+1)}{2}$ points it is possible to construct a derived set (see Subsection \ref{sub:gegen} for more details) in $\mathbb{S}^{n-2}$ with $\frac {n(n+1)}{2}-1=\frac {(n-1)(n+2)}{2}$ points that must attain the harmonic bound.

It is much more difficult to construct equiangular sets of large size. The first $O(n^2)$ construction was found by de Caen in \cite{cae00} (see also \cite{jed15,gre16} for large equiangular sets). This construction has $\frac 2 9 (n+1)^2$ points in $\mathbb{S}^{n-1}$, where $n=3\cdot 2^{2t-1}$, $t\in\mathbb{N}$. There are also $O(n^{3/2})$ constructions of equiangular sets built on Taylor graphs and projective planes \cite{lem73}.

We improve Gerzon's general bound on equiangular sets for almost all natural $n$.

\begin{restatable}{theorem}{equiang}\label{thm:equiang}
If $n \geq 359$, then 
$$M(n) \leq \frac{(a^2-2)(a^2-1)}2,
$$ 
where $a$ is the unique positive odd integer such that $a^2-2 \leq n \leq (a+2)^2-3$.
\end{restatable}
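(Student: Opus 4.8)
The plan is to reduce to an equiangular set of $N$ lines with a single rational angle and then play three bounds against each other according to the size of that angle. If $N\le 2n$ we are done at once: $n\ge 359$ forces the odd integer $a$ of the statement to satisfy $a\ge 19$, and then $2n\le 2\bigl((a+2)^2-3\bigr)$ is far below $\tfrac{(a^2-1)(a^2-2)}2$. So assume $N>2n$; by Neumann's theorem (Lemmens--Seidel) the common angle is $\arccos(1/b)$ for an odd integer $b\ge 3$, and since $a^2-2\le n\le(a+2)^2-3$ one has $b^2\ne n$ whenever $b\ne a$. The three tools I would use are: (i) the Lemmens--Seidel relative bound $N\le \frac{n(b^2-1)}{b^2-n}$, valid for $b^2>n$; (ii) its counterpart $N\le\frac{n(b^2-1)}{n-b^2}$ for $b^2<n$, coming from a degree‑$4$ zonal linear‑programming estimate in $\mathbb{RP}^{n-1}$ (equivalently a degree‑$2$ zonal LP for the derived set below), refined by the universal bound $M(n)\lesssim\tfrac23 na^2$ for angle $1/a$ proved above; and (iii) Theorem~\ref{thm:main} applied to the \emph{derived set}: fixing a line, choosing unit vectors $v_i$ with $\langle v_1,v_i\rangle=1/b$, the normalized projections of $v_2,\dots,v_N$ onto $v_1^\perp$ form a two‑distance set of $N-1$ points on $\mathbb S^{n-2}$ with inner products $\tfrac1{b+1}$ and $-\tfrac1{b-1}$, so $N\le g(n-1)+1=\tfrac{(n-1)n}2+1$ (here $n-1$ is never an exceptional dimension of Theorem~\ref{thm:main}, since no value $(2k+1)^2-3$ lies in $[a^2-1,(a+2)^2-4]$).

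Next I would split on the size of $b$. If $b\ge a+2$, then $b^2\ge(a+2)^2>n$ with $b^2-n\ge 3$; as the right side of (i) is decreasing in $b^2$ this yields $N\le\frac{((a+2)^2-3)((a+2)^2-1)}3<\tfrac{(a^2-1)(a^2-2)}2$. If $b\le a-2$, then $b^2\le(a-2)^2<a^2-2\le n$ and $n-b^2\ge 4a-6$, so (ii) gives a bound of order $a^3$, again far below the target. If $b=a$: for $n=a^2-2$, bound (i) gives \emph{exactly} $\tfrac{(a^2-1)(a^2-2)}2$; for $n\ge a^2+3$, bound (ii) gives $N\le\frac{n(a^2-1)}{n-a^2}\le\frac{(a^2+3)(a^2-1)}3\le\tfrac{(a^2-1)(a^2-2)}2$ (using $a\ge 19$). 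This leaves only the four near‑critical cases $b=a$, $n\in\{a^2-1,a^2,a^2+1,a^2+2\}$, where (i) is inapplicable or vacuous, (ii) overshoots by a positive multiple of $n$, and (iii) overshoots by about $(n-a^2+2)\,n$ (for $n=a^2-1$ it is off by exactly $1$).

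For those cases I would combine (iii) with rigidity of near‑extremal two‑distance sets: a two‑distance set on $\mathbb S^{n-2}$ of size within $\Theta(n^2)$ of the maximum $g(n-1)$ has its pair of inner products essentially pinned down (it is forced toward the simplex‑edge‑centre configuration, or toward the structure of an associated regular two‑graph, which is available because $N>2n$), and that forced pair is incompatible with $\{\tfrac1{a+1},-\tfrac1{a-1}\}$ unless $n=a^2-2$; this contradiction yields $N\le\tfrac{(a^2-1)(a^2-2)}2$. Finally, for the classification: equality $N=\tfrac{(a^2-1)(a^2-2)}2$ can occur only in the case $n=a^2-2$, $b=a$, where bound (i) is tight; unwinding that tightness identifies the Seidel matrix $S$ with the Seidel matrix of the regular two‑graph carrying the extremal parameters, i.e. the lift of a tight spherical $4$‑design, which is the asserted classification.

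The hard part will be precisely those four near‑critical cases $b=a$ with $b^2\approx n$: none of the three off‑the‑shelf bounds is individually sharp there, and to remove the last $\Theta(n)$ (respectively $\Theta(n^2)$) of slack one must either push the zonal LP to higher degree with parameters adapted to the pair $\bigl(\tfrac1{a+1},-\tfrac1{a-1}\bigr)$ of the derived set, or exploit integrality and the two‑graph structure coming from $N>2n$; this is where the full strength of the zonal‑spherical‑function method and a careful equality analysis are needed.
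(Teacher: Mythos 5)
Your skeleton (reduce to a single odd $b$ via Neumann's theorem, then split according to the size of $b^2$ relative to $n$) is the same as the paper's, and your handling of non-integral $b$ and of $b\ge a+2$ via the relative bound reproduces the paper's cases exactly. But the middle of your argument rests on a bound that is false. Tool (ii), $N\le \frac{n(b^2-1)}{n-b^2}$ for $b^2<n$, cannot hold: for $b=3$ and $n=100$ it would give $N<9$, while $M_{\frac13}(n)\ge 2n-2$ for $n\ge 15$ by the classical constructions cited in Lemmens--Seidel. No zonal LP of any degree produces a reverse relative bound of this shape. If you instead fall back on the universal bound of Theorem \ref{thm:bound2}, $M_{\frac1b}(n)\le n\left(\frac23 b^2+\frac47\right)+2$, it beats the target $\frac{(a^2-2)(a^2-1)}{2}\approx\frac12 n^2$ only when $b^2\lesssim\frac34 n$; for instance with $a=19$, $n=438$, $b=17$ it gives roughly $84{,}600$ against the target $64{,}620$. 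So the entire band $\frac34 n\lesssim b^2\le n+2$ -- which contains not only your ``four near-critical cases'' with $b=a$ but also many cases with $b\le a-2$ -- is left uncovered, and your proposed rigidity analysis of near-extremal two-distance sets addresses only the former. The ingredient you are missing is Theorem \ref{thm:2} of the paper: $M_{\frac1b}(n)\le\frac{(b^2-2)(b^2-1)}{2}$ whenever $n\le 3b^2-16$, obtained by applying the degree-$1$ and degree-$3$ Gegenbauer positivity constraints to the derived set with inner products $\frac1{b+1}$, $\frac{-1}{b-1}$. That single inequality disposes of the whole problematic band at once (it is monotone in $b$, so $b\le a$ immediately gives the target), and it is what makes the three-case split close.

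Separately, your tool (iii) is circular: in the paper, Theorem \ref{thm:main} for $\mathbb{S}^{n-2}$ is \emph{deduced from} Theorem \ref{thm:equiang} for $\mathbb{S}^{n-1}$ (a two-distance set with $\alpha+\beta<0$ lifts to an equiangular set one dimension up), so you cannot invoke $g(n-1)=\frac{(n-1)n}{2}$ while proving the bound on $M(n)$. Fortunately it is not needed once Theorem \ref{thm:2} is in hand; your equality analysis in the case $n=b^2-2$, $b=a$ is also unnecessary for this theorem (the statement is a pure inequality; the classification is the separate Theorem \ref{thm:extr1}).
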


If $n=a^2-2$ for some odd integer $a$ the upper bound of Theorem \ref{thm:equiang} coincides with Gerzon's bound. For all other $n\geq 359$, the bound of Theorem \ref{thm:equiang} strictly improves Gerzon's bound.

Denote by $M_{\alpha} (n)$ the cardinality of the largest equiangular set in $\mathbb{S}^{n-1}$ with scalar products $\alpha,-\alpha$. The question of finding maximum equiangular sets with prescribed scalar products was raised in \cite{lem73}. In their paper, Lemmens and Seidel found that $M_{\frac 1 3} (n)\leq 2n-2$ for $n\geq 15$. They also proved that if $n\alpha^2<1$, then $M_{\alpha}(n)\leq\frac {n(1-\alpha^2)}{1-n\alpha^2}$ (the so-called \textit{relative bound}, see Theorem \ref{thm:rel}). Generally, it was shown by Neumann (see \cite{lem73}) that $M_{\frac 1 a} (n)\leq 2n$ unless $a$ is an odd natural number. For a fixed odd natural $a$ the behavior of $M_{\frac 1 a} (n)$ was not known before the paper of Bukh \cite{buk16}, where he showed that $M_{\frac 1 a} (n) \leq cn$, where $c= 2^{O(a^2)}$. Recently this bound was significantly improved by Balla, Dr\"{a}xler, Keevash, and Sudakov. In \cite{bal16} they showed that for sufficiently large $n$, $M_{\frac 1 a} (n)\leq 2n-2$. If $a\neq 3$, their bound is even better, namely $M_{\frac 1 a}(n)\leq 1.93n$.

We prove a new universal bound on $M_{\frac 1 a} (n)$ that works for all $a\geq 3$ and all $n$.

\begin{restatable}{theorem}{secondbound} \label{thm:bound2}
$$M_{\frac 1 a}(n)  \leq n\left(\frac 2 3 a^2 + \frac {4} {7}\right)+2,  \text{ for all $a\geq 3$ and for all $n \in \N$.}$$
\end{restatable}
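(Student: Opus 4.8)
The plan is to use the zonal spherical function / positive-semidefinite programming framework advertised in the introduction, specialized to an equiangular set with inner products $\frac1a, -\frac1a$. Let $X=\{x_1,\dots,x_N\}\subset\mathbb{S}^{n-1}$ be such a set, so the Gram matrix $G$ has diagonal $1$ and off-diagonal entries $\pm\frac1a$. First I would write $G=\frac1a M+(1-\frac1a)I$, where $M$ is the Seidel-type matrix with $\pm1$ off-diagonal and $1$ on the diagonal; then $aG = M + (a-1)I \succeq 0$, i.e. $M\succeq -(a-1)I$, so $M$ has smallest eigenvalue $\geq -(a-1)$ and rank-related constraints because $\operatorname{rank}(G)\le n$. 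The idea is to feed into this a well-chosen low-degree polynomial $f(t)$ that is nonnegative on $\{\frac1a,-\frac1a\}$, has nonnegative Gegenbauer coefficients beyond the constant term, and has a strictly positive constant term, and then evaluate $\sum_{i,j} f(\langle x_i,x_j\rangle)$ in two ways. The left side is $\geq Nf(1)$ (diagonal) since all off-diagonal terms are $\geq0$; the right side is $f_0 N^2 + \sum_{k\ge1} f_k \sum_{i,j} G_k(\langle x_i,x_j\rangle) \geq f_0 N^2$ only if we drop the positive-semidefinite contributions — that gives the wrong direction. Instead I would run it the other way: choose $f$ with $f(t)\le 0$ on $\{\frac1a,-\frac1a\}$ and the positive-definiteness used to bound $N$ from above, which is exactly the linear-programming/Delsarte setup; the quadratic nature of equiangular sets makes the relevant space of test functions small.

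Concretely, the cleanest route is the one suggested by the phrase "universal bound $\sim\frac23 n a^2$": combine the relative bound (Theorem \ref{thm:rel}) with a pigeonhole/rank argument. If $n\alpha^2<1$, i.e. $n<a^2$, the relative bound already gives $M_{1/a}(n)\le \frac{n(1-1/a^2)}{1-n/a^2}$, which for $n$ not too close to $a^2$ is $O(a^2)$ — but near $n=a^2$ this blows up, so that regime needs the other tool. For $n\ge a^2$ I would instead fix a point $x_N$, split the remaining points according to whether $\langle x_i,x_N\rangle=+\frac1a$ or $-\frac1a$; replacing some points by their antipodes (which preserves the equiangular property up to sign bookkeeping on a graph) one reduces, as in Lemmens–Seidel, to controlling a set of points all having inner product $+\frac1a$ with a fixed vector. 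Projecting orthogonally to $x_N$ and rescaling, such a "star" of size $m$ maps to an equiangular-type configuration in $\mathbb{S}^{n-2}$ with a shifted inner product, and iterating this descent $O(a^2)$ times (each step can absorb at most roughly a bounded multiple of $n$ points before the dimension or the relative bound forces termination) yields the linear-in-$n$ bound with leading constant governed by how the inner product drifts under projection — this is where the $\frac23 a^2$ arises, from summing a geometric-type series in the projected parameters.

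The key steps, in order, are: (1) set up the Gram/Seidel matrix and record $M+(a-1)I\succeq0$ and $\operatorname{rank}\le n$; (2) dispose of the range $n$ small relative to $a^2$ using the relative bound (Theorem \ref{thm:rel}), which is immediate; (3) for the complementary range, perform the antipodal-switching reduction to a "star" and set up the orthogonal projection onto $x_N^\perp$, tracking exactly how the inner product $\frac1a$ transforms; (4) iterate the projection, at each stage applying either the relative bound or Gerzon's bound (Theorem \ref{thm:gerzon}) to the current sub-configuration, and sum the contributions; (5) optimize the cutoff between the two regimes and the number of iterations to extract the constants $\frac23 a^2 + \frac47$ and the additive $+2$ (the $+2$ plainly coming from the two points removed at the first projection step, or from a small base case). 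I expect step (4)–(5) — controlling the descent tightly enough to get a clean leading constant $\frac23$ rather than something larger, and matching the lower-order term $\frac47$ — to be the main obstacle; the inequalities are individually elementary but the bookkeeping of how many points survive each projection, and the precise inner-product shift $\frac1a \mapsto \frac{1/a - (1/a)^2}{1-(1/a)^2}$ type recursion, must be done carefully and uniformly in $a\ge3$ and in $n$.
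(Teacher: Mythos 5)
Your reduction machinery is essentially the paper's: the antipodal switching at a first point $x_0$, the passage to the derived two-distance set in $\mathbb{S}^{n-2}$ with inner products $\frac{1}{a+1}$ and $\frac{-1}{a-1}$, and then one further split at a second point $x_{00}$ into two derived two-distance sets $T_1,T_2\subset\mathbb{S}^{n-3}$, with the additive $+2$ accounting for the two removed points. One correction to your step (4), though: you should \emph{not} iterate the descent $O(a^2)$ times. The paper notes that $l$ levels of splitting give asymptotically $\frac{2^l}{2l+1}na^2$, which is optimal at $l=1$; additional iterations make the leading constant worse, not better, so the whole argument is exactly two projections deep.

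The genuine gap is in what you propose to apply to the sub-configurations. The sets $T_1$ and $T_2$ have inner products $\left\{\frac{1}{a+2},\,-\frac{a+3}{(a-1)(a+2)}\right\}$ and $\left\{-\frac{1}{a-2},\,\frac{a-3}{(a+1)(a-2)}\right\}$ respectively: they are two-distance but not equiangular, so the relative bound (Theorem \ref{thm:rel}) does not apply --- its proof requires all off-diagonal entries of the $G_2^{(n)}$ matrix to coincide, which forces $\beta=-\alpha$ --- and Gerzon's bound is quadratic in the dimension, hence useless for a conclusion linear in $n$. The indispensable ingredient, and the paper's main new tool, is Corollary \ref{cor:2-dist}: for a two-distance set in $\mathbb{S}^{m-1}$ with \emph{arbitrary} products $\alpha,\beta$,
$$|X|\leq \frac{m+2}{1-\frac{m-1}{m(1-\alpha)(1-\beta)}}$$
whenever the right-hand side is positive. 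This follows from Theorem \ref{thm:general}, which combines Nozaki's eigenvalue-multiplicity lemma (each adjacency matrix $\Phi_l$ has the prescribed eigenvalue $k_l$ with multiplicity at least $|X|-K$, by the polynomial rank argument) with positive semidefiniteness and the trace of $P(G)$ for a Gegenbauer polynomial $P$. Applied to $T_1$ and $T_2$ it yields bounds strictly below $n\bigl(\frac{(a-1)(a+2)^2}{3a+5}+1\bigr)$ and $n\bigl(\frac{(a+1)(a-2)^2}{3a-5}+1\bigr)$, whose sum plus $2$ is at most $n\bigl(\frac23 a^2+\frac47\bigr)+2$ because the discrepancy from $\frac23 a^2$ is decreasing in $a$ and equals $\frac47$ at $a=3$. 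Without this bound your descent has no tool that controls a non-equiangular two-distance set linearly in $n$ in the regime $n\gg a^2$, and neither the leading constant $\frac23$ nor any linear bound can be extracted from the relative bound and Gerzon's bound alone.
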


The main benefit of this bound is its universality. Only for $a^2 \gtrsim \frac 3 4  n$, Gerzon's bound is a better upper bound for $M_{\frac 1 a} (n)$. For a fixed $a$, the bound of Theorem \ref{thm:bound2} is asymptotically inferior to the bound from \cite{bal16}. However, the bound from \cite{bal16} is valid only for sufficiently large $n$ (depending on $a$) and our bound works for all pairs $(a,n)$. We also note that the bound of Theorem \ref{thm:bound2} is usually better than SDP bounds in \cite{bar14} and \cite{kin16} when $a$ is relatively small compared to $n$ (see Section \ref{sect:2-dist} for details). 

Finally, we classify all extremal cases, where the bound of Theorem \ref{thm:equiang} is attained.

\begin{restatable}{theorem}{extremal}\label{thm:extr1}
For each odd natural number $a\geq 3$, if $n\leq 3a^2-16$, then the equiangular set $X\subset\mathbb{S}^{n-1}$ with scalar products $\frac 1 a, -\frac 1 a$ and exactly $\frac{(a^2-2)(a^2-1)}2$ points must belong to an $(a^2-2)$-dimensional subspace.
\end{restatable}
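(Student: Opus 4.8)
Write $X=\{v_1,\dots,v_N\}\subset\mathbb{S}^{n-1}$ with $\langle v_i,v_j\rangle=\pm\frac1a$ for $i\neq j$ and $N=\binom{a^2-1}{2}=\frac{(a^2-2)(a^2-1)}{2}$, and set $d=\dim\operatorname{span}(X)$; after an isometry $X$ spans $\mathbb{R}^d$, with $d\le n\le 3a^2-16$. One inequality is immediate: Gerzon's bound (Theorem~\ref{thm:gerzon}) in $\mathbb{R}^d$ gives $\binom{a^2-1}{2}=N\le\binom{d+1}{2}$, so $d\ge a^2-2$; the content of the theorem is therefore the reverse inequality $d\le a^2-2$, i.e.\ ruling out $a^2-1\le d\le 3a^2-16$.

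The endpoint of the argument should be a rigidity statement: the extremal hypothesis $|X|=N$ forces $X$ to be a tight frame in $\mathbb{R}^d$, equivalently (with $G=I+\tfrac1aS\succeq0$ the Gram matrix, $S$ the Seidel matrix of the switching class, $\operatorname{rank}G=d$) that $S$ has only the two eigenvalues $-a$ and $\tfrac{N-1}{a}$. Granting that, the dimension is pinned down by a one--line moment identity: from $\sum_i v_iv_i^{\top}=\tfrac{N}{d}I_d$,
$$\frac{N^2}{d}=\Bigl\|\sum_i v_iv_i^{\top}\Bigr\|_F^2=\sum_{i,j}\langle v_i,v_j\rangle^2=N+\frac{N(N-1)}{a^2},$$
so $d=\dfrac{Na^2}{a^2+N-1}$; substituting $N=\frac{(a^2-1)(a^2-2)}{2}$, whence $a^2+N-1=\frac{a^2(a^2-1)}{2}$, gives $d=a^2-2$. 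With $d\ge a^2-2$ this yields $d=a^2-2$, so $X$ lies in an $(a^2-2)$--dimensional subspace; consistently, its derived set then lies in $\mathbb{S}^{a^2-4}$ with $\frac{a^2(a^2-3)}{2}$ points and attains the harmonic bound there, i.e.\ is a tight spherical $4$--design (cf.\ the correspondence recalled before Theorem~\ref{thm:equiang}).

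The real work, and the step I expect to be the main obstacle, is the rigidity. First I would try the zonal--spherical--function method of the paper, run in $\mathbb{R}^d$ with a polynomial $f(t)=(t^2-\tfrac1{a^2})\,q(t)$ of degree $4$ ($q$ a quadratic depending on $d$ and $a$), expanded as $f=\sum_{k\ge0}f_k\phi_k$ in the zonal spherical functions on $\mathbb{S}^{d-1}$ with all $f_k\ge0$: as $f$ vanishes at $\pm\tfrac1a$ the off--diagonal terms drop, so $Nf(1)=\sum_{i,j}f(\langle v_i,v_j\rangle)\ge f_0N^2$, giving $N\le f(1)/f_0$ whenever $f_0>0$; a Gegenbauer--coefficient computation should keep $f_0>0$ exactly for $d\le 3a^2-16$ (this is where the threshold in the statement should come from), and equality, via $f_2>0$, would force $\sum_i(v_iv_i^{\top}-\tfrac1dI_d)=0$. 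The difficulty is that this two--point inequality by itself is not strict enough for $d>a^2-2$, since the $(a^2-2)$--dimensional example embedded in $\mathbb{R}^d$ shows $f(1)/f_0>N$ strictly there; closing the argument will need either a three--point/semidefinite sharpening of the bound, or a structural analysis of the graph $\Gamma$ in the switching class. For the latter, from $\tfrac a2 G=\tfrac{a-1}{2}I+\tfrac12 J-A\succeq0$ one reads off that the adjacency matrix $A$ of $\Gamma$ has second--largest eigenvalue at most $\tfrac{a-1}{2}$ and eigenvalue $\tfrac{a-1}{2}$ with multiplicity at least $N-d-1$; a graph on $\binom{a^2-1}{2}$ vertices with these spectral constraints ought to be forced, for $d$ in the stated range, to have $d=a^2-2$, and making that precise is the crux.

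Finally, two caveats: one should check that the chosen $f$ has $f_2>0$ for every $d$ with $a^2-1\le d\le 3a^2-16$, since it is $f_2>0$ that upgrades ``equality'' to ``tight frame'', raising the degree of $f$ (or the semidefinite relaxation) where necessary; and the small cases---notably $a=3$, where $3a^2-16=11$ and the ambient dimension never reaches the threshold $359$ of Theorem~\ref{thm:equiang}---are best handled by the classical uniqueness of the $28$ equiangular lines at angle $\tfrac13$ in $\mathbb{R}^7$ (and of the $276$ at angle $\tfrac15$ in $\mathbb{R}^{23}$).
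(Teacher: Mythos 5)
Your reduction is correct as far as it goes: Gerzon's bound in the span gives $d\ge a^2-2$, and your moment identity correctly shows that \emph{if} $X$ were a tight frame in its span (equivalently, if the Seidel matrix had exactly two eigenvalues), then $d=\frac{Na^2}{a^2+N-1}=a^2-2$. But the tight-frame/two-eigenvalue property is precisely the rigidity you would need to prove, and you do not prove it --- you explicitly defer it as ``the crux.'' Moreover, the first route you sketch cannot work, for the reason you yourself note: the linear-programming inequality $N\le f(1)/f_0$ on $\mathbb{S}^{d-1}$ is not tight for $d>a^2-2$ (the extremal set embedded in a larger space is a counterexample to tightness), so equality never occurs and $f_2>0$ gives you nothing. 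The threshold $n\le 3a^2-16$ also does not arise from a Gegenbauer-coefficient positivity condition as you guess; it is inherited from Theorem~\ref{thm:2}. So the proposal has a genuine gap at the central step.

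What the paper actually does to close this gap is a switching argument on the derived set. With the normalization of Theorem~\ref{thm:2}, the derived set $Z_{x_0,\frac1a}$ carries a graph $G$ (edges $=$ pairs at scalar product $\frac1{a+1}$), and because the bound of Theorem~\ref{thm:2} is attained, the inequalities there are equalities, so the edge count $N_1/2$ is determined \emph{and is the same for every choice of base point} $x_0$. Re-basing at a vertex $u$ replaces $G$ by the graph in which edges and non-edges between $N(u)$ and $N'(u)$ are swapped; equality of edge counts forces the number of edges across the cut to be exactly $k(N-1-k)/2$. Feeding this into the positive semidefiniteness of the Gram matrices of the second-level derived sets $T_1$, $T_2$ (lower bounds on $t_1$ and $t_1'$) yields, after simplification, $0\ge \frac{2a}{a^2-1}\bigl(k-\frac{(N-a)(a+1)}{2a}\bigr)^2$, so every degree equals $\frac{(N-a)(a+1)}{2a}$ and $G$ is strongly regular with parameters $\bigl(N,k,\frac{3k-N-1}{2},\frac k2\bigr)$. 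The spectrum of such a graph is then computed explicitly, the Gram matrix of the derived set is found to have rank $a^2-3$, and $X$ is $(a^2-2)$-dimensional. This combinatorial rigidity argument is exactly the missing content of your proposal; without it (or the classical uniqueness results you invoke only for $a=3,5$, which do not cover general $a$), the proof is incomplete.
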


\begin{restatable}{corollary}{extremaltwo}\label{cor:extr2}
Let $n \geq 359$ and let $a$ be the unique positive odd integer such that $a^2-2 \leq n \leq (a+2)^2-3$. Suppose that $X\subset\mathbb{S}^{n-1}$ is an equiangular set of size $\frac{(a^2-2)(a^2-1)}2$. Then $X$ has scalar products $\frac 1 a,-\frac 1 a$ and is contained in an $(a^2-2)$-dimensional subspace.
\end{restatable}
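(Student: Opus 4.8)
The plan is to deduce Corollary~\ref{cor:extr2} from Theorems~\ref{thm:equiang} and~\ref{thm:extr1}, together with two classical facts: the relative bound (Theorem~\ref{thm:rel}) and Neumann's theorem that an equiangular set in $\mathbb{R}^n$ with more than $2n$ vectors has reciprocal angle an odd integer. Essentially all of the work lies in pinning down the common angle.

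First I would pass to the span. Write $X\subset\mathbb{S}^{n-1}$ for the given set, with scalar products $\pm\beta$ and $|X|=N:=\frac{(a^2-2)(a^2-1)}{2}=\binom{a^2-1}{2}$, and let $m=\dim\operatorname{span}X$. Gerzon's bound (Theorem~\ref{thm:gerzon}) gives $N\le\binom{m+1}{2}$, so $m\ge a^2-2$; thus $a^2-2\le m\le n\le(a+2)^2-3$, hence $m\ge 359$ and $a$ is also the odd integer attached to $m$, whence $N\le M(m)\le N$ by Theorem~\ref{thm:equiang}. So $X$ is already extremal in its span and I may assume $m=n$. Since $n\le a^2+4a+1$ and $a\ge 19$, a one-line estimate gives $N>2n$, so by Neumann's theorem $\beta=1/b$ for an odd integer $b\ge 3$; the goal becomes $b=a$.

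To exclude $b\ge a+2$: then $b^2\ge(a+2)^2>n$, so $n\beta^2<1$ and the relative bound yields $N=M_{1/b}(n)\le\frac{n(b^2-1)}{b^2-n}$; this quantity decreases in $b$ and increases in $n$, and $b^2-n\ge(a+2)^2-\big((a+2)^2-3\big)=3$, so it is at most $\frac{\big((a+2)^2-3\big)\big((a+2)^2-1\big)}{3}$, while $\frac{(a^2-2)(a^2-1)}{2}>\frac{((a+2)^2-3)((a+2)^2-1)}{3}$ is equivalent to $a^3-16a^2-49a-32>0$, which holds for odd $a\ge 19$ and fails for $a\le 17$ --- exactly why the hypothesis $n\ge 359$ appears. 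To exclude $b\le a-2$: fix $x_0\in X$, switch each remaining point so that its inner product with $x_0$ is $-\frac1b$, and pass to the derived two-distance set $Y\subset x_0^\perp\cong\mathbb{R}^{n-1}$; it has $N-1=\frac{a^2(a^2-3)}{2}$ points, scalar products $\frac1{b+1}$ and $-\frac1{b-1}$, and (switching preserves spans) it spans $\mathbb{R}^{n-1}$. The harmonic bound for two-distance sets (Theorem~\ref{thm:s-dist}) gives $|Y|\le\frac{(n-1)(n+2)}{2}$; since $x\mapsto x(x+3)$ is increasing and $|Y|=\frac{(a^2-3)\big((a^2-3)+3\big)}{2}$, equality holds exactly when $n=a^2-2$, in which case $Y$ is a tight spherical $4$-design and its scalar products are forced by \cite{del77} to equal $\frac{-1\pm\sqrt{n+2}}{n+1}=\big\{\frac1{a+1},-\frac1{a-1}\big\}$, giving $b=a$. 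Once $\beta=1/a$ is known, $n\le(a+2)^2-3\le 3a^2-16$ (as $a\ge 5$), so Theorem~\ref{thm:extr1} places $X$ in an $(a^2-2)$-dimensional subspace, which is the assertion.

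The step I expect to fight hardest with is the exclusion of $b\le a-2$ when $n$ lies strictly inside the window $a^2-1\le n\le(a+2)^2-3$: there the derived set $Y$ falls short of the harmonic bound, so the comparison above is inconclusive and a genuinely sharper input is required --- either a linear-programming refinement of the harmonic bound specialised to the scalar-product pair $\big(\frac1{b+1},-\frac1{b-1}\big)$, or a direct spectral analysis of the Gram matrix of $X$ (whose minimal eigenvalue $-b$ has multiplicity $N-n$). In practice this means the quantitative machinery behind Theorems~\ref{thm:equiang} and~\ref{thm:bound2}, not merely their statements, is what closes this case.
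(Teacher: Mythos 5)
Your argument has a genuine gap, and you have correctly located it yourself: the exclusion of $b\le a-2$ when $n$ lies strictly inside the window $a^2-1\le n\le(a+2)^2-3$. There the derived set $Y$ has $\frac{a^2(a^2-3)}{2}$ points in $\mathbb{S}^{n-2}$, strictly below the harmonic bound $\frac{(n-1)(n+2)}{2}$, so the tight-design rigidity you invoke is unavailable and nothing in your argument forces $b=a$. This is not a loose end that a small perturbation of the harmonic bound will close on its own; a quantitative upper bound on $M_{\frac 1 b}(n)$ for small odd $b$ is genuinely required.

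The missing input is already in the paper, and at the level of \emph{statements}, not machinery. The proof of Theorem \ref{thm:equiang} splits according to $b$ into three exhaustive cases and bounds $M_{\frac 1 b}(n)$ in each. If $b\le a-2$, then $b^2-2<a^2-2\le n$, so you are not in the relative-bound case; either $n\le 3b^2-16$, in which case Theorem \ref{thm:2} gives $M_{\frac 1 b}(n)\le\frac{(b^2-2)(b^2-1)}{2}<\frac{(a^2-2)(a^2-1)}{2}$, or $n\ge 3b^2-15$, in which case Theorem \ref{thm:bound2} gives $M_{\frac 1 b}(n)\le\frac{14n^2+246n+126}{63}<\frac{(a^2-2)(a^2-1)}{2}$ for $a\ge 19$ (exactly the paper's third case computation). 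Combined with your correct relative-bound exclusion of $b\ge a+2$ --- which is the paper's second case --- this forces $b=a$ and $n\le 3a^2-16$, after which Theorem \ref{thm:extr1} finishes as you say. The paper's own proof of the corollary is precisely this observation: rereading the three cases in the proof of Theorem \ref{thm:equiang}, only the first admits equality, and only with $b=a$. Your preliminary reductions (passing to the span, invoking Neumann's parity theorem) are correct but unnecessary once one argues this way, since the non-odd-$1/\beta$ case is already killed by $M(n)\le 2n$.
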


The paper is organized as follows. In Section \ref{sect:s-dist} we overview the main methods applicable to the problem of finding maximal $s$-distance sets. Subsection \ref{sub:zon} gives a brief introduction to zonal spherical functions on compact two-point homogeneous spaces. In Subsection \ref{sub:gegen}, we show how Gegenbauer polynomials can be used to prove the relative bound on equiangular sets by Lemmens and Seidel \cite{lem73} and the bound from \cite{yu16f}. In \ref{sub:pol} we describe the polynomial method and demonstrate by giving short proofs of the bounds from \cite{del77,lem73}. Section \ref{sect:general} is devoted to a new method of finding upper bounds on $s$-distance sets in two-point homogeneus spaces. In Section \ref{sect:2-dist} we show how this method can be applied to two-distance spherical sets. Subsection \ref{sub:sphere} contains the general spherical bounds and in Subsection \ref{sub:equiang} we derive new bounds on equiangular sets. Proofs of Theorems \ref{thm:main} and \ref{thm:equiang} are contained in Section \ref{sect:proofs}. In Section \ref{sect:extremal}, we classify all extremal equiangular sets attaining the bound from Theorem \ref{thm:equiang}. Section \ref{sect:discuss} is devoted to open questions and discussion.

\section{$s$-distance sets}\label{sect:s-dist}

In this section we overview two general approaches used for finding upper bounds on $s$-distance sets. 

\subsection{Zonal spherical functions in compact two-point homogeneous spaces}\label{sub:zon}

The approach using zonal spherical functions for finding packing bounds was introduced by Delsarte et al \cite{del73,del77} and then generalized by Kabatyansky and Levenshtein \cite{kab78} to all compact two-point homogeneous spaces. Here we briefly define zonal spherical functions and explain how they can be used in this context.

A metric space $M$ with group $G$ acting on it is called two-point homogeneous if $G$ acts transitively on ordered pairs with the same distance. Infinite two-point homogeneous spaces with the structure of a connected Riemannian manifold are classified by Wang \cite{wan52} and include only spheres; real, complex, quaternionic spaces; and the Cayley projective plane.

Suppose $G$ is a connected compact group or a finite group. Taking $H$ as a stabilizer of some fixed point $x_0$, we can associate each point in $M$ with the corresponding left coset $gH$. Let $\mu$ be the Haar measure for $G$. This measure induces the unique invariant measure on $M$ which we also denote by $\mu$.

By $L^2(G)$ we denote the vector space of complex functions $f$ on $G$ satisfying

$$\int\limits_G |f(g)|^2d\mu (g)<\infty$$

with the standard inner product 

$$(f_1,f_2)=\int\limits_G f_1(g)\overline{f_2(g)}d\mu(g).$$

By taking only functions of $L^2(G)$ constant on left cosets of $H$, we can define $L^2(M)$ with the analogously defined inner product

$$(f_1,f_2)=\int\limits_M f_1(x)\overline{f_2(x)}d\mu(x).$$

By the Peter-Weyl theorem \cite{pet27}, $L^2(G)$ splits into an orthogonal direct sum of irreducible finite-dimensional representations of $G$. With certain conditions on $G$ (see, for instance, \cite{kab78} or \cite[Chapter 9]{con88} for details) this decomposition induces a decomposition of $L^2(M)$ into mutually orthogonal subspaces $\{V^{(k)}, k=0,1,\ldots\}$, $h_k=dim\ V^{(k)}$, where each space defines an irreducible representation of $G$. For an orthonormal basis $\{e_1^{(k)},\ldots,e_{h_k}^{(k)}\}$ of $V^{(k)}$, define for each $k$

$$P_k(x,y)=\sum\limits_{i=1}^{h_k} e_i^{(k)}(x)\overline{e_i^{(k)}(y)}.$$

Since $V^{(k)}$ defines a unitary representation of $G$, $P_k(gx,gy)=P_k(x,y)$ and, because of two-point homogeneity of $M$, $P_k(x,y)$ depends only on the distance between $x$ and $y$. The function $P_k$ is called \textit{a zonal spherical function} associated with $V^{(k)}$.

In practice, it is often convenient to use a modified distance function $\tau(x,y)$ (we call it just a distance function from now on) to express $P_k(x,y)$ as a function of one variable $P_k(\tau(x,y))$. Throughout the paper we will also use the notation $\tau_0=\tau(x,x)$.

The definition of $P_k$ implies that, for any finite set of points $\{x_1,\ldots, x_N\}$ and any set of complex numbers $\{a_1,\ldots, a_N\}$,

$$\sum\limits_{i=1}^N \sum_{j=1}^N P_k(x_i,x_j)a_i\overline{a_j}\geq 0$$ so the following proposition is true.

\begin{proposition}\label{prop:psd}
For any finite set $X=\{x_1,\ldots, x_N\}$ and any $k\geq 0$, the matrix $\left(P_k(\tau(x_i,x_j))\right)$ is positive semidefinite.
\end{proposition}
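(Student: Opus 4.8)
The plan is to observe that the matrix in question is a Gram matrix, hence automatically positive semidefinite. First I would invoke what was already established just above the statement: by $G$-invariance $P_k(gx,gy)=P_k(x,y)$ together with two-point homogeneity of $M$, the value $P_k(x,y)$ depends only on the modified distance $\tau(x,y)$, so $P_k(\tau(x_i,x_j))=P_k(x_i,x_j)$ and the matrix $A:=\bigl(P_k(\tau(x_i,x_j))\bigr)_{i,j=1}^N$ is well-defined.

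Next, to each point $x_j$ I would associate the vector $v_j=\bigl(e_1^{(k)}(x_j),\ldots,e_{h_k}^{(k)}(x_j)\bigr)\in\C^{h_k}$. By the defining formula $P_k(x,y)=\sum_{i=1}^{h_k}e_i^{(k)}(x)\overline{e_i^{(k)}(y)}$, the $(i,j)$ entry of $A$ equals $\langle v_i,v_j\rangle$ for the standard Hermitian inner product on $\C^{h_k}$; thus $A$ is the Gram matrix of $v_1,\ldots,v_N$. Consequently, for any $a=(a_1,\ldots,a_N)\in\C^N$,
$$\sum_{i=1}^N\sum_{j=1}^N P_k(x_i,x_j)\,a_i\overline{a_j}=\Bigl\|\sum_{i=1}^N a_i v_i\Bigr\|^2\ge 0,$$
which is precisely the inequality displayed before the statement. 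Since moreover $\overline{P_k(x_i,x_j)}=P_k(x_j,x_i)$ by the same formula, $A$ is Hermitian, and a Hermitian matrix $A$ with $a^{*}Aa\ge 0$ for all $a\in\C^N$ is positive semidefinite; this finishes the proof.

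There is essentially no obstacle here: the entire content sits in the definition of $P_k$ as a sum over an orthonormal basis of $V^{(k)}$, which exhibits it as a (reproducing-type) kernel. The only point requiring care is the bookkeeping with complex conjugates --- putting the bar on the correct factor and checking Hermitian symmetry of $A$ --- so that the nonnegative quadratic form genuinely certifies positive semidefiniteness rather than merely nonnegativity of some bilinear form. If one prefers not to single out a basis, an equivalent route is to recognize $P_k$ as the reproducing kernel of the finite-dimensional subspace $V^{(k)}\subset L^2(M)$ and to quote the standard fact that reproducing kernels are positive semidefinite; but the explicit Gram-matrix computation above is the shortest path.
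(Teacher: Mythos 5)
Your proof is correct and is essentially the paper's own argument: the authors likewise deduce positive semidefiniteness directly from the definition $P_k(x,y)=\sum_{i}e_i^{(k)}(x)\overline{e_i^{(k)}(y)}$, which makes the displayed quadratic form a squared norm. You merely make the Gram-matrix structure and the Hermitian-symmetry bookkeeping explicit, which the paper leaves implicit.
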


For the spherical case, the standard scalar product $(x,y)$ is used as a distance function $\tau(x,y)$ and the zonal spherical functions are given by Gegenbauer polynomials which can be defined recursively as follows:

$$G_0^{(n)}(t)=1,\ \ \ G_1^{(n)}(t)=t,$$
$$G_k^{(n)}(t)=\frac {(n+2k-4)tG_{k-1}^{(n)}(t)-(k-1)G_{k-2}^{(n)}(t)}{n+k-3}.$$

Here for convenience we use the normalized version of Gegenbauer polynomials, i.e. for all of them, $G_k^{(n)}(1)=1$.

Since it will be used further in the paper, we would like to write down Gegenbauer polynomials of degree 2 and 3 as well:

$$G_2^{(n)}(t)=\frac {nt^2-1}{n-1},\ \ \ G_3^{(n)}(t)=\frac {(n+2)t^3-3t}{n-1}.$$

Schoenberg \cite{sch42} proved the converse version of Proposition \ref{prop:psd} for the spherical case by showing that the real function $f$ satisfies $\left(f(\tau(x_i,x_j))\right)\succeq 0$ for any finite set $X=\{x_1,\ldots, x_N\}$ only if $f$ is a non-negative combination of Gegenbauer polynomials. Bochner \cite{boc41} proved the general version of this theorem for zonal spherical functions.

\subsection{Bounds on equiangular sets via Gegenbauer polynomials}\label{sub:gegen}

Here we show how to use zonal spherical functions to prove upper bounds on the size of equiangular sets. The following relative bound was proved by Lemmens and Seidel \cite{lem73}.

\begin{theorem}[Relative bound]\label{thm:rel}
For an $n$-dimensional equiangular set $X$ with scalar products $\{\alpha,-\alpha\}$ with $n\alpha^2<1$,

$$|X|\leq \frac {n(1-\alpha^2)}{1-n\alpha^2}$$
\end{theorem}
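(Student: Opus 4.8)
The plan is to use the positive semidefiniteness of Gegenbauer matrices (Proposition \ref{prop:psd}) applied to the degree-2 polynomial $G_2^{(n)}$. Let $X=\{x_1,\dots,x_N\}\subset\mathbb{S}^{n-1}$ be equiangular with scalar products in $\{\alpha,-\alpha\}$, so that $(x_i,x_j)^2=\alpha^2$ for all $i\neq j$ and $(x_i,x_i)=1$. First I would form the $N\times N$ matrix $M=\big(G_2^{(n)}((x_i,x_j))\big)$, which by Proposition \ref{prop:psd} is positive semidefinite; in particular $\sum_{i,j} G_2^{(n)}((x_i,x_j))\geq 0$ (taking all $a_i=1$). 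Using the explicit formula $G_2^{(n)}(t)=\frac{nt^2-1}{n-1}$ and the fact that $(x_i,x_j)^2$ equals $1$ when $i=j$ and $\alpha^2$ otherwise, the diagonal contributes $N\cdot\frac{n-1}{n-1}=N$ entries equal to $1$, and the $N(N-1)$ off-diagonal entries each equal $\frac{n\alpha^2-1}{n-1}$.

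The key computation is then
\[
0\leq \sum_{i,j} G_2^{(n)}((x_i,x_j)) = N + N(N-1)\cdot\frac{n\alpha^2-1}{n-1}.
\]
Since $n\alpha^2<1$ by hypothesis, the coefficient $\frac{n\alpha^2-1}{n-1}$ is negative, so this inequality bounds $N$ from above. Solving, $1+(N-1)\frac{n\alpha^2-1}{n-1}\geq 0$, i.e. $(N-1)\frac{1-n\alpha^2}{n-1}\leq 1$, which rearranges to $N-1\leq \frac{n-1}{1-n\alpha^2}$ and hence
\[
N \leq 1 + \frac{n-1}{1-n\alpha^2} = \frac{n-n\alpha^2}{1-n\alpha^2} = \frac{n(1-\alpha^2)}{1-n\alpha^2}.
\]
This is exactly the claimed bound.

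There is essentially no obstacle here: the only things to be careful about are that $G_2^{(n)}(1)=1$ (which holds by the normalization convention stated in the excerpt) and that the sign of $n\alpha^2-1$ is correctly tracked, since the direction of the inequality when dividing depends on $1-n\alpha^2>0$. One could alternatively phrase the argument more robustly by noting that $\mathrm{tr}(M)=N$ and that the largest eigenvalue of a PSD matrix is at least its average eigenvalue $\mathrm{tr}(M)/N=1$, while $M=\frac{n\alpha^2-1}{n-1}J+\frac{n-n\alpha^2}{n-1}I$ (with $J$ the all-ones matrix) has eigenvalues $\frac{n-n\alpha^2}{n-1}$ (multiplicity $N-1$) and $\frac{n-n\alpha^2}{n-1}+N\cdot\frac{n\alpha^2-1}{n-1}$ (once); requiring the latter to be $\geq 0$ gives the same inequality. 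Either route yields the result in a few lines.
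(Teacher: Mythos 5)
Your proposal is correct and is essentially identical to the paper's proof: both apply Proposition \ref{prop:psd} to the matrix $\bigl(G_2^{(n)}((x_i,x_j))\bigr)$, sum all entries (diagonal entries $1$, off-diagonal entries $\frac{n\alpha^2-1}{n-1}$), and solve $N+N(N-1)\frac{n\alpha^2-1}{n-1}\geq 0$ for $N$. The alternative eigenvalue phrasing you mention is a harmless repackaging of the same inequality.
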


\begin{proof}
Let $X=\{x_1,\ldots, x_N\}$. The matrix $\left(G_2^{(n)}(x_i,x_j)\right)$ is positive semidefinite by Proposition \ref{prop:psd} so its sum of elements must be non-negative. Note that its diagonal elements are 1 and all the non-diagonal elements are $\frac{n\alpha^2-1}{n-1}$ so we get $N+N(N-1)\frac{n\alpha^2-1}{n-1}\geq 0$ and, therefore, $N\leq \frac {n-1}{1-n\alpha^2} +1 =  \frac {n(1-\alpha^2)}{1-n\alpha^2}.$
\end{proof}

As one more example we show how to obtain a short proof of the main theorem from \cite{yu16f}. The proof in \cite{yu16f} relied on a more intricate approach of Bachoc and Vallentin \cite{bac08a}.

\begin{theorem}\label{thm:2}
$$M_{\frac 1 a}(n) \leq  \frac {(a^2-2)(a^2-1)} 2  $$
for all $n$ and $a$ such that $n \leq 3a^2-16$ and $a \geq 3$.
\end{theorem}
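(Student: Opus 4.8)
The plan is to use the Gegenbauer-polynomial / linear-programming method sketched in Subsection \ref{sub:gegen}, but now combining degree-2 and degree-3 information rather than degree 2 alone. Let $X=\{x_1,\dots,x_N\}\subset\mathbb{S}^{n-1}$ be equiangular with scalar products $\frac1a,-\frac1a$. For a point $x_i$ let $p_i$ be the number of $j\neq i$ with $(x_i,x_j)=\frac1a$ and $q_i=N-1-p_i$ the number with $(x_i,x_j)=-\frac1a$. The key observation is that, because $G_3^{(n)}$ is an \emph{odd} polynomial, the quantity $\sum_j G_3^{(n)}((x_i,x_j))$ splits neatly: the contribution of a "$+$" neighbor is $G_3^{(n)}(\tfrac1a)$ and that of a "$-$" neighbor is $-G_3^{(n)}(\tfrac1a)$, so row $i$ of the Gram-type sum equals $G_3^{(n)}(1)+(p_i-q_i)G_3^{(n)}(\tfrac1a)$. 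Similarly the degree-2 row sum is $1+(N-1)\frac{n/a^2-1}{n-1}$, independent of $i$ since $G_2^{(n)}$ is even.

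First I would write down the two positivity facts coming from Proposition \ref{prop:psd}: the full sum of entries of $\bigl(G_2^{(n)}((x_i,x_j))\bigr)$ and of $\bigl(G_3^{(n)}((x_i,x_j))\bigr)$ are both $\geq 0$. The degree-2 inequality is exactly the relative bound computation and gives $N+N(N-1)\frac{n/a^2-1}{n-1}\geq 0$; since $n\leq 3a^2-16<a^2$ forces $n/a^2-1<0$, this already yields $N\leq\frac{n-1}{1-n/a^2}+1$, which is a bound of the wrong (only) order $\sim a^2$. To sharpen it I would instead feed the degree-3 inequality the extra structure. Using $G_3^{(n)}(\tfrac1a)=\frac{(n+2)/a^3-3/a}{n-1}=\frac{n+2-3a^2}{a^3(n-1)}$, note that the hypothesis $n\leq 3a^2-16$ makes $n+2-3a^2\leq -14<0$, so $G_3^{(n)}(\tfrac1a)<0$. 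Summing the degree-3 row sums over $i$, the cross terms $\sum_i(p_i-q_i)$ need to be controlled; here I would use that $\sum_i p_i$ is twice the number of "$+$" edges and apply a convexity / averaging argument, or better, combine the degree-2 and degree-3 inequalities \emph{per row} before summing, exploiting that the degree-2 row sum is constant.

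The cleanest route, and the one I would pursue, is to apply the polynomial method of Subsection \ref{sub:pol} directly: choose a polynomial $F(t)=\lambda_2 G_2^{(n)}(t)+\lambda_3 G_3^{(n)}(t)+\lambda_0$ with $\lambda_0,\lambda_2\geq 0$ and $\lambda_3$ chosen so that $F(\tfrac1a)\leq 0$ and $F(-\tfrac1a)\leq 0$ simultaneously, i.e. $F$ is nonpositive at both inner-product values. Then $\sum_{i,j}F((x_i,x_j))\geq \lambda_0 N^2$ by positive semidefiniteness (dropping the $G_2,G_3$ terms, which are $\geq 0$ summed), while on the other hand $\sum_{i,j}F((x_i,x_j)) = NF(1) + \sum_{i\neq j}F((x_i,x_j)) \leq NF(1)$, giving $N\leq F(1)/\lambda_0$. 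Optimizing the coefficients under the constraints $F(\tfrac1a)\leq 0$, $F(-\tfrac1a)\leq 0$, $\lambda_0\geq 0$, $\lambda_2\geq 0$ should yield exactly $F(1)/\lambda_0=\frac{(a^2-2)(a^2-1)}{2}$ when $n\leq 3a^2-16$; the asymmetry between the two forbidden values (only $G_3$ distinguishes them, and it is negative) is what lets the odd polynomial do real work here, unlike in the proof of Theorem \ref{thm:rel}.

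The main obstacle I anticipate is purely algebraic bookkeeping: verifying that the optimal choice of $(\lambda_0,\lambda_2,\lambda_3)$ actually exists with the required sign constraints precisely in the range $n\leq 3a^2-16$, and that the resulting ratio $F(1)/\lambda_0$ simplifies to $\frac{(a^2-2)(a^2-1)}{2}$. One must check that the constraint $\lambda_2\geq 0$ (or equivalently that one does not need a negative multiple of $G_2^{(n)}$) is compatible with forcing $F$ nonpositive at \emph{both} $\pm\tfrac1a$; it is here that the hypothesis $a\geq 3$ and the linear constraint on $n$ enter, ensuring $G_3^{(n)}(\tfrac1a)<0$ with enough room. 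If the direct single-polynomial optimization turns out to require a slightly more general $F$ (for instance a degree-4 term, or the substitution $t\mapsto$ the squared-distance variable used elsewhere in the paper), the fallback is the two-inequality argument of the previous paragraph, tracking $\sum_i(p_i-q_i)$ via the identity $\sum_i p_i=\sum_i q_i$ would require — which it need not, so an explicit second-moment estimate on the $p_i$ may be needed.
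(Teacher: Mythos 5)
There is a genuine gap, and it starts with an arithmetic slip that undermines the whole plan: for $a\geq 3$ we have $3a^2-16\geq 11>9=a^2$, so the hypothesis is $n\leq 3a^2-16$ with $n$ typically \emph{larger} than $a^2$, not smaller. Consequently $G_2^{(n)}(\tfrac1a)=\frac{n/a^2-1}{n-1}$ is positive in the relevant range, the degree-2 inequality $N+N(N-1)G_2^{(n)}(\tfrac1a)\geq 0$ is vacuous, and the relative bound (which needs $n\alpha^2<1$) does not apply at all. Worse, your ``cleanest route'' cannot work even in principle on the original set $X$: its set of inner products $\{\tfrac1a,-\tfrac1a\}$ is symmetric, so for any feasible $F=\lambda_0+\lambda_2G_2^{(n)}+\lambda_3G_3^{(n)}$ the even part $\tilde F(t)=\tfrac12(F(t)+F(-t))=\lambda_0+\lambda_2G_2^{(n)}(t)$ satisfies $\tilde F(\pm\tfrac1a)=\tfrac12(F(\tfrac1a)+F(-\tfrac1a))\leq 0$ with the same $\lambda_0$ and $\tilde F(1)\leq F(1)$ (since $G_k(-1)=(-1)^k$ and $\lambda_k\geq 0$). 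So the odd term buys nothing, the LP collapses to the degree-2 bound, and that bound is infeasible for $n\geq a^2$. No choice of $(\lambda_0,\lambda_2,\lambda_3)$ can produce $\frac{(a^2-2)(a^2-1)}2$ here.

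The missing idea is to break the symmetry \emph{before} applying the LP machinery. The paper fixes a point $x_0\in X$, replaces points by their antipodes so that every other point has inner product $\tfrac1a$ with $x_0$, and passes to the derived set $Z_{x_0,\frac1a}\subset\mathbb{S}^{n-2}$, a two-distance set with $|X|-1$ points and the \emph{asymmetric} inner products $\tfrac1{a+1}$ and $\tfrac{-1}{a-1}$. On that set the odd polynomials $G_1^{(n-1)}$ and $G_3^{(n-1)}$ genuinely distinguish the two values; summing the entries of the corresponding positive semidefinite matrices gives two inequalities in $N$, $N_1$, $N_2$ (with $N_1+N_2=N(N-1)$), and adding the first with the explicit non-negative weight $\frac{16}{(a^2-6)(a+1)^2(a-1)^2}$ to the second makes the coefficients of $N_1$ and $N_2$ equal, which is exactly where the hypothesis $n=3a^2-16$ enters; solving the resulting quadratic inequality yields $N\leq\frac{a^4-3a^2}2$ and hence the claimed bound. (One also needs the separate observation that if $a$ is not an odd integer then $M_{\frac1a}(n)\leq 2n$, which is already below the target.) Your instinct that odd-degree information is the key is right, but it can only be exploited after the projection step that your proposal omits.
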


Before proving this theorem we describe the construction of \textit{derived spherical $s$-distance sets}.  For an $s$-distance set $Z\subset\mathbb{S}^{n-1}$ the set of all points of $Z$ equidistant from some fixed point $x\in Z$ form an $s$-distance set with the same set of distances in a sphere of smaller dimension and smaller radius. By shifting these points and dilating the sphere we can get an $s$-distance set on a unit sphere. Defining this formally, the set $Z_{x,\alpha}$ derived from $Z$, with respect to $x\in Z$ and scalar product $\alpha$, is $\left\{\frac{y-\alpha x}{\sqrt{1-\alpha^2}}|y\in Z, (y,x)=\alpha\right\}$. If the set of scalar products of $Z$ is $\{\alpha, \beta,\gamma,\ldots\}$, then $Z_{x,\alpha}$ will have scalar products $\{\frac {\alpha-\alpha^2}{1-\alpha^2}, \frac {\beta-\alpha^2}{1-\alpha^2}, \frac {\gamma-\alpha^2}{1-\alpha^2},\ldots\}$.

\begin{proof}[Proof of Theorem \ref{thm:2}]

Unless $a$ is an odd natural number (see \cite{lem73} and the explanation after Lemma \ref{lem:eig}), $M_{\frac 1 a}(n) \leq 2n\leq 2(3a^2-16)$ which is always smaller than $\frac {(a^2-2)(a^2-1)} 2$. Therefore, we are interested only in the case when $a$ is an odd natural. It is sufficient to prove the statement of the theorem for $n$ equal to $3a^2-16$ exactly since any set of smaller dimension will be realizable in larger dimensions as well. From this moment on, $n=3a^2-16$.

Consider an equiangular set $X$ with scalar products $\{\frac 1 a, -\frac 1 a\}$ in $\mathbb{S}^{n-1}$ and choose an arbitrary point $x_0\in X$. Note that any point in an equiangular set may be changed to its opposite and the set still remains equiangular. Using this procedure we make $(y,x_0)=\frac 1 a$ hold for all $y\in X$, $y\neq x_0$. Denote the equiangular set obtained this way by $Z$. Now we consider the derived set $Z_{x_0,\frac 1 a}$. This is a two-distance set in $\mathbb{S}^{n-2}$ with scalar products $\frac {1}{a+1}$, $\frac {-1}{a-1}$ and $|X|-1$ points.

Denote $|X|-1$ by $N$ and assume that among ordered pairs of points in $Z_{x_0,\frac 1 a}$ there are $N_1$ with scalar product $\frac {1}{a+1}$ and $N_2$ with scalar product $\frac {-1}{a-1}$. These values should satisfy $N(N-1)=N_1+N_2$.

By taking the sum of elements from matrices formed by values of Gegenbauer polynomials and applying Proposition \ref{prop:psd}, we get that $\sum G_k^{(n-1)}((x_i,x_j))\geq 0$, where the sum is taken over all ordered pairs of points in $Z_{x_0,\frac 1 a}$. Taking $k=1,3$ we get the following two inequalities

\begin{align}
N+ \frac 1 {a+1} N_1 + \frac {-1} {a-1} N_2 \geq 0 \\
N+ \frac{-2a-6}{(a^2-6)(a+1)^3} N_1 +\frac{-2a+6}{(a^2-6)(a-1)^3}N_2 \geq 0 
\end{align}

Taking the first inequality with a non-negative coefficient $\frac{16}{(a^2-6)(a+1)^2(a-1)^2}$ and adding to the second one, we get 
$$\frac{16}{(a^2-6)(a+1)^2(a-1)^2} \left(N+ \frac 1 {a+1} N_1 +\frac {-1} {a-1} N_2\right)+$$

$$+\left(N+ \frac{-2a-6}{(a^2-6)(a+1)^3} N_1 +\frac{-2a+6}{(a^2-6)(a-1)^3}N_2\right) \geq 0.$$

$$N\left(\frac{16}{(a^2-6)(a+1)^2(a-1)^2}+1\right)+(N_1+N_2)\frac {-2a^2+10}{(a^2-6)(a+1)^2(a-1)^2}\geq 0.$$

Since $N_1+N_2=N(N-1)$ we obtain

$$\frac{16}{(a^2-6)(a+1)^2(a-1)^2}+1\geq (N-1)\frac {2a^2-10}{(a^2-6)(a+1)^2(a-1)^2}$$

$$N\leq 1+\frac {16+(a^2-6)(a+1)^2(a-1)^2}{2a^2-10}=1+\frac {(a^2-5)(a^4-3a^2-2)}{2(a^2-5)}=\frac {a^4-3a^2}{2}.$$

$|X|=N+1\leq \frac {a^4-3a^2}{2} +1 =\frac {(a^2-2)(a^2-1)}{2}.$
\end{proof}

\begin{remark}
This proof allows us to get more information on extremal configurations than the proof in \cite{yu16f}. The upper bound is attained only if both inequalities become equalities and, subsequently, values of $N_1$ and $N_2$ are known exactly and must be the same regardless of the choice of the point $x_0$. We will use this to prove Theorem \ref{thm:extr1} in Section \ref{sect:extremal}.
\end{remark}

\subsection{Polynomial method}\label{sub:pol}

One of the key methods in analyzing sets with few distances is the polynomial method. The main idea is to find a set of polynomials nullifying almost all values of the distance function and use the following standard lemma.

\begin{lem}\label{lem:lin}
For an arbitrary set $X$ consider a vector space $\mathcal{V}$ of functions $f:X\rightarrow F$, where $F$ is a field. Assume that $f_1,\ldots,f_m\in V$, a finite-dimensional vector subspace of $\mathcal{V}$. Then for any finite set of elements $\{x_1,\ldots, x_N\}\subseteq X$, the rank of the matrix $\left(f_i(x_j)\right)$ is at most the dimension of $V$.
\end{lem}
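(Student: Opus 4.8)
The plan is to recognize this as a pure linear-algebra fact about the evaluation map, proved over the abstract field $F$ with no recourse to inner products or orthogonality. First I would fix the finite set $\{x_1,\ldots,x_N\}\subseteq X$ and introduce the evaluation homomorphism $\mathrm{ev}\colon \mathcal{V}\to F^N$ given by $\mathrm{ev}(f)=\bigl(f(x_1),\ldots,f(x_N)\bigr)$. Since sums and scalar multiples of functions $X\to F$ are computed pointwise, $\mathrm{ev}$ is $F$-linear, and its restriction to the finite-dimensional subspace $V$ is again linear.

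Next I would observe that the $i$-th row of the matrix $\bigl(f_i(x_j)\bigr)$ is exactly $\mathrm{ev}(f_i)$. Hence every row of this matrix lies in $\mathrm{ev}(V)$, so the row space of $\bigl(f_i(x_j)\bigr)$ is a subspace of $\mathrm{ev}(V)$. Because $\mathrm{ev}$ is linear, $\dim_F \mathrm{ev}(V)\le \dim_F V$, and therefore the rank of the matrix, being the dimension of its row space, is at most $\dim V$, which is the claim.

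There is essentially no obstacle here; the only thing to be careful about is to argue over the abstract $F$ and to bound the row rank directly rather than invoking anything about $F^N$ as an inner-product space. If one prefers a completely explicit version, one can choose a spanning set $g_1,\ldots,g_d$ of $V$ with $d=\dim V$, write each $f_i$ as an $F$-linear combination of the $g_\ell$, and note that this exhibits every row of $\bigl(f_i(x_j)\bigr)$ as a linear combination of the $d$ fixed vectors $\bigl(g_\ell(x_1),\ldots,g_\ell(x_N)\bigr)$, so the row space has dimension at most $d$. Either way the proof is a couple of lines.
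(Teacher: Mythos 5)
Your proof is correct and is essentially the paper's own argument: the paper notes that any linear dependence among the $f_i$ passes to the corresponding lines of the matrix $\bigl(f_i(x_j)\bigr)$, which is exactly the statement that these lines are the images of the $f_i$ under the linear evaluation map into $F^N$, so the rank is bounded by $\dim V$. Your version merely makes the evaluation homomorphism explicit; no substantive difference.
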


\begin{proof}
Any linear dependence of the functions $f_i$ is also a linear dependence of the columns of this matrix, so the rank is not greater than the dimension of the space of functions.
\end{proof}

The straightforward application of this method in \cite{del77} allows one to prove a general bound on spherical $s$-distance sets (the so-called \textit{absolute bound}).

\begin{theorem}\label{thm:s-dist}
The size of an $s$-distance set in $\mathbb{S}^{n-1}$ is not greater than $\binom{n+s-1}{s}+\binom{n+s-2}{s-1}$
\end{theorem}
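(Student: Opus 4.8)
The plan is to apply the polynomial method of Lemma \ref{lem:lin} with a family of Lagrange-type polynomials indexed by the points of the set. Let $X=\{x_1,\ldots,x_N\}\subset\mathbb{S}^{n-1}$ be an $s$-distance set, and let $\alpha_1,\ldots,\alpha_s$ be the $s$ distinct values taken by the scalar product $(x_i,x_j)$ over pairs $i\neq j$. For each $i$ I would introduce the polynomial
$$F_i(y)=\prod_{k=1}^{s}\frac{(x_i,y)-\alpha_k}{1-\alpha_k},$$
which has degree $s$ in the coordinates of $y$. Since $(x_i,x_i)=1$ while $(x_i,x_j)\in\{\alpha_1,\ldots,\alpha_s\}$ for $j\neq i$, we get $F_i(x_j)=\delta_{ij}$. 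Hence the matrix $\bigl(F_i(x_j)\bigr)_{1\le i,j\le N}$ is the identity matrix and has rank $N$.

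Next I would apply Lemma \ref{lem:lin} with $F=\mathbb{R}$ and $X=\mathbb{S}^{n-1}$: the functions $F_1,\ldots,F_N$, viewed as functions on the sphere, all lie in the space $V$ of restrictions to $\mathbb{S}^{n-1}$ of polynomials of degree at most $s$ in $n$ variables. Therefore $N\le\dim V$, and it remains to bound $\dim V$.

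To compute $\dim V$ I would use the single relation $\|y\|^2=1$ valid on $\mathbb{S}^{n-1}$: multiplying a monomial of degree $d<s$ by a suitable even power of $\|y\|^2$ does not change its restriction to the sphere and raises its degree by an even number, so this pushes it to degree exactly $s$ (if $d\equiv s\pmod 2$) or to degree exactly $s-1$ (otherwise). Consequently every polynomial of degree at most $s$ agrees on $\mathbb{S}^{n-1}$ with a linear combination of monomials of degree exactly $s$ and of degree exactly $s-1$. The number of such monomials is $\binom{n+s-1}{s}+\binom{n+s-2}{s-1}$, so $\dim V\le\binom{n+s-1}{s}+\binom{n+s-2}{s-1}$; combining this with the previous step yields the asserted inequality. (Equivalently, one may decompose $V$ into the mutually orthogonal spaces of spherical harmonics of degrees $0,1,\ldots,s$ and sum their dimensions, which telescopes to the same value.)

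There is no serious obstacle here; the only point requiring a moment of care is the dimension count, namely verifying that reduction modulo $\|y\|^2-1$ really lets one replace an arbitrary polynomial of degree at most $s$ by one supported on monomials of the two top degrees. For the upper bound one needs only that these monomials span the restriction space $V$ — linear independence is not required — so this step is short.
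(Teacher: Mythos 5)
Your proposal is correct and is essentially the same argument as the paper's: the paper uses the unnormalized polynomials $f_j(x)=\prod_{i=1}^s((x,x_j)-\beta_i)$, notes the matrix $(f_i(x_j))$ is diagonal with nonzero diagonal and hence of rank $N$, and invokes Lemma \ref{lem:lin} with the space of spherical polynomials of degree at most $s$. The only difference is your normalization by $\prod(1-\alpha_k)$ and your explicit justification of the dimension count, which the paper simply asserts.
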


\begin{proof}
Suppose $X=\{x_1,\ldots,x_N\}$ is an $s$-distance set in $\mathbb{S}^{n-1}$ with the set of scalar products $\{\beta_1,\ldots,\beta_s\}$. Define polynomials $f_j(x)=\prod_{i=1}^s((x,x_j)-\beta_i)$. Matrix $\left(f_i(x_j)\right)$ has non-zero entries on its diagonal and 0 everywhere else. Hence its rank is $N$. By Lemma \ref{lem:lin} the rank is no greater than the dimension of the space of spherical polynomials with degree $\leq s$ which is exactly $\binom{n+s-1}{s}+\binom{n+s-2}{s-1}$.
\end{proof}

Similarly, we can get the bound on equiangular sets from \cite{lem73}.

\begin{theorem}[Gerzon's bound]\label{thm:gerzon}
The size of an equiangular set in $\mathbb{S}^{n-1}$ is not greater than $\frac {n(n+1)}{2}$.
\end{theorem}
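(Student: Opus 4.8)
The plan is to mimic the polynomial-method proof of the absolute bound (Theorem~\ref{thm:s-dist}), but to exploit the extra structure coming from the fact that the scalar products are $\alpha$ and $-\alpha$, i.e.\ symmetric about $0$. Concretely, for an equiangular set $X=\{x_1,\dots,x_N\}\subset\mathbb{S}^{n-1}$ with scalar products $\pm\alpha$, I would define the single-variable functions $f_j(x)=(x,x_j)^2-\alpha^2$ for each $j$. Since $(x_i,x_j)^2=\alpha^2$ for $i\neq j$ while $(x_j,x_j)^2=1$, the matrix $\bigl(f_i(x_j)\bigr)$ is $(1-\alpha^2)I$, hence has rank $N$. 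By Lemma~\ref{lem:lin}, $N$ is at most the dimension of the smallest space of functions on $\mathbb{S}^{n-1}$ containing all the $f_j$.

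The key point is then to identify that space and compute its dimension. Each $f_j$ is, up to an additive constant, the function $x\mapsto (x,x_j)^2$, which is a homogeneous quadratic form in the coordinates of $x$ evaluated via the symmetric matrix $x_jx_j^{\mathsf T}$. So every $f_j$ lies in the span of $\{x\mapsto q(x): q \text{ a quadratic form}\}$ together with the constant function $1$ — but on the sphere the constant $1$ already equals $\sum x_k^2$, so the relevant ambient space is just the space of quadratic forms in $n$ variables, which has dimension $\binom{n+1}{2}=\frac{n(n+1)}{2}$. (Equivalently: the $N$ matrices $x_jx_j^{\mathsf T}$ are linearly independent in the $\binom{n+1}{2}$-dimensional space of symmetric $n\times n$ matrices, because pairing $x_ix_i^{\mathsf T}$ against $x_jx_j^{\mathsf T}$ via the trace inner product gives exactly $(x_i,x_j)^2$, so the Gram matrix of these vectors is $(1-\alpha^2)I+\alpha^2 J$, which is nonsingular.) This immediately yields $N\le \frac{n(n+1)}{2}$.

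The main obstacle here is essentially bookkeeping rather than depth: one must be careful that the additive constant $\alpha^2$ in $f_j$ does not force us into a genuinely larger space. Using the identity $1=\sum_k x_k^2$ on the sphere absorbs the constant into the quadratic forms, so no dimension is lost; alternatively the linear-independence-of-rank-one-symmetric-matrices formulation sidesteps the issue entirely and is the cleanest route. I would present the proof in the symmetric-matrix language: set $S_j=x_jx_j^{\mathsf T}$, observe $\langle S_i,S_j\rangle_{\mathrm{tr}}=(x_i,x_j)^2$, deduce that $\{S_1,\dots,S_N\}$ is linearly independent since its Gram matrix $(1-\alpha^2)I+\alpha^2 J$ is positive definite, and conclude $N\le\dim\,\mathrm{Sym}_n(\mathbb{R})=\frac{n(n+1)}{2}$.
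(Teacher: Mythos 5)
Your proposal is correct and follows essentially the same route as the paper: the same functions $f_j(x)=(x,x_j)^2-\alpha^2$, the same observation that the evaluation matrix is a nonzero multiple of the identity, and the same application of Lemma~\ref{lem:lin} with the space of quadratic forms restricted to the sphere. Your explicit handling of the additive constant via $1=\sum_k x_k^2$ (and the equivalent rank-one symmetric matrix reformulation) is just a more careful spelling-out of the dimension count that the paper states without comment.
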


\begin{proof}
Assume $X=\{x_1,\ldots,x_N\}$ is an equiangular set in $\mathbb{S}^{n-1}$ with scalar products $\pm\alpha$. For each $j$, define $f_j(x)=(x,x_j)^2-\alpha^2$. As in the proof of the previous theorem, the matrix $\left(f_i(x_j)\right)$ has non-zero entries on its diagonal and 0 everywhere else. The rank is $N$ and by Lemma \ref{lem:lin} it's not greater than the dimension of the space of polynomials generated by all monomials of degree 2 or 0 which is exactly $\binom{n+1}{2}$.
\end{proof}

Musin \cite{mus09} showed that the linear span of polynomials $((x,x_j)-\alpha)((x,x_j)-\beta)$ does not contain any monomials of degree 1 if $\alpha+\beta\geq 0$ and, subsequently, generalized Gerzon's bound to all two-distance sets with scalar products $\alpha, \beta$ such that $\alpha+\beta\geq 0$.

\begin{lem}\label{lem:musin}
The size of a two-distance set in $\mathbb{S}^{n-1}$ with scalar products $\alpha,\beta$ such that $\alpha+\beta\geq 0$ is not greater than $\frac {n(n+1)}{2}$.
\end{lem}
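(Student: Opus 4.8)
The plan is to mimic the proof of Gerzon's bound (Theorem~\ref{thm:gerzon}) via the polynomial method of Lemma~\ref{lem:lin}, but to be more careful about which monomials actually appear in the relevant linear span. So let $X=\{x_1,\ldots,x_N\}$ be a two-distance set in $\mathbb{S}^{n-1}$ with scalar products $\alpha,\beta$, $\alpha+\beta\geq 0$, and for each $j$ put $f_j(x)=((x,x_j)-\alpha)((x,x_j)-\beta)$. As before, $\left(f_i(x_j)\right)$ is a diagonal matrix with nonzero diagonal (the diagonal entries are $(1-\alpha)(1-\beta)>0$), so it has rank $N$, and by Lemma~\ref{lem:lin} this is bounded by the dimension of the linear span $V$ of the $f_j$'s inside the space of polynomial functions on the sphere. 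The point is that $V$ lives inside the span of monomials of degree $2$ and degree $0$ \emph{only}: expanding $f_j(x)=(x,x_j)^2-(\alpha+\beta)(x,x_j)+\alpha\beta$, the degree-$1$ term is $-(\alpha+\beta)(x,x_j)$.

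First I would explain why the degree-$1$ contributions can be absorbed. On the sphere we have the identity $1=(x,x)=\sum_k x_k^2$, so the constant function $1$ and the degree-$2$ monomials $x_kx_\ell$ together span a space of dimension $\binom{n+1}{2}$ (one relation among the $n+\binom{n+1}{2}$ monomials of degrees $0$ and $2$). The nontrivial claim, due to Musin, is that when $\alpha+\beta\geq 0$ the span of the $f_j$ in fact contains no genuine degree-$1$ component that would force us into a larger space; equivalently, $V$ is contained in $\mathrm{Harm}_0\oplus\mathrm{Harm}_2$ (harmonic polynomials of degree $0$ and $2$), whose dimension is exactly $\frac{n(n+1)}{2}$. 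Concretely, write $f_j = (x,x_j)^2 - (\alpha+\beta)(x,x_j) + \alpha\beta$ and observe that $(x,x_j)^2 = \sum_{k,\ell}(x_j)_k(x_j)_\ell\, x_kx_\ell$ is a genuine degree-$2$ form while $(x,x_j)$ is genuinely degree $1$; the subtlety is that linear combinations $\sum_j c_j f_j$ may have the degree-$1$ parts cancel. One shows that if the degree-$1$ part of $\sum_j c_j f_j$ does \emph{not} vanish, then — because $-(\alpha+\beta)\le 0$ and the Gram structure of the configuration is positive semidefinite — one can still realize $\sum_j c_j f_j$ as a polynomial of degree $\le 2$ with no degree-$1$ term after using $\sum_k x_k^2=1$, by a convexity/positivity argument on the vector of coefficients. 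This is precisely the place where the hypothesis $\alpha+\beta\ge 0$ is used, and it is the main obstacle in the argument.

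The cleanest way to carry this out, and the step I would spend the most care on, is the following. Consider the map sending a function on $X$ to the coefficient vector of its representation in degree $\le 2$ polynomials, and note that the degree-$1$ block of $f_j$ is the vector $-(\alpha+\beta)x_j\in\R^n$. A linear combination $g=\sum_j c_j f_j$ has vanishing degree-$1$ part iff $(\alpha+\beta)\sum_j c_j x_j=0$. If $\alpha+\beta>0$ this says $\sum_j c_j x_j=0$; if $\alpha+\beta=0$ it is automatic. In either case, I claim the map $g\mapsto(\text{degree-}2\text{ part of }g,\ \text{degree-}0\text{ part of }g)$ from $V$ into $\mathrm{Sym}^2(\R^n)\oplus\R$, followed by the quotient identifying $\mathrm{id}$ with the constant via $\sum x_k^2=1$, is injective \emph{provided} we first show the degree-$1$ parts genuinely cancel in any relation, which is exactly where $\alpha+\beta\ge 0$ enters through the positive semidefiniteness of the Gram matrix of $\{x_j\}$ together with $\left(f_i(x_j)\right)$ being diagonal. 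Granting this, $\dim V\le \dim\bigl(\mathrm{Harm}_0\oplus\mathrm{Harm}_2\bigr)=1+\frac{(n-1)(n+2)}{2}=\frac{n(n+1)}{2}$, and combining with $\mathrm{rank}\left(f_i(x_j)\right)=N$ and Lemma~\ref{lem:lin} yields $N\le\frac{n(n+1)}{2}$, as desired. (For the borderline case $\alpha+\beta=0$ this reduces exactly to Gerzon's bound, Theorem~\ref{thm:gerzon}, recovering the equiangular statement.)

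I expect the genuine content — and the hardest step to get right — to be verifying that no degree-$1$ monomial survives in the span $V$ when $\alpha+\beta\ge 0$: one must rule out that some nontrivial $\sum_j c_j f_j$ has a nonzero linear part that nevertheless lies in the degree-$\le 2$ span only by virtue of $\sum_k x_k^2=1$ reintroducing it. The resolution uses that $\sum_k x_k^2 = 1$ contributes only to the degree-$0$ and degree-$2$ pieces (it has no linear part at all), so a nonzero linear part cannot be cancelled by it; hence the linear parts must literally cancel, i.e. $(\alpha+\beta)\sum_j c_j x_j = 0$, and one then checks that imposing this extra constraint costs nothing in the dimension count because the degree-$2$ parts $x_j x_j^{\mathsf T}$ already determine $x_j$ up to sign when $\alpha+\beta>0$, so $V$ embeds into $\mathrm{Sym}^2(\R^n)/\langle I\rangle\oplus\R$ of dimension $\frac{n(n+1)}{2}$. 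This is the technical heart; everything else is the routine bookkeeping already illustrated in the proofs of Theorems~\ref{thm:s-dist} and~\ref{thm:gerzon}.
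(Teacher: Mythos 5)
Your overall frame is the right one, but the proof as written has a genuine gap, plus one misstatement that matters for the dimension count. First, the claim that ``$V$ is contained in $\mathrm{Harm}_0\oplus\mathrm{Harm}_2$'' is false whenever $\alpha+\beta>0$: each $f_j$ has a nonzero degree-one harmonic component $-(\alpha+\beta)(x,x_j)$, so no individual $f_j$ lies in that subspace. What the argument actually needs is the weaker statement $V\cap\mathrm{Harm}_1=\{0\}$, which gives $\dim V\le\frac{n(n+3)}{2}-n=\frac{n(n+1)}{2}$. Second, and more seriously, this key statement is never proved: you explicitly write ``granting this,'' and the two justifications you sketch do not work. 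The suggestion that a surviving linear part can be absorbed via $\sum_k x_k^2=1$ is contradicted by your own (correct) later observation that this relation has no linear part; and the closing argument --- that the degree-two part $x_jx_j^{\mathsf T}$ determines $x_j$ up to sign --- concerns individual $f_j$'s, whereas the issue is linear combinations: knowing $\sum_j c_jx_jx_j^{\mathsf T}$ does not determine $\sum_j c_jx_j$, so injectivity of the degree-$(0,2)$ coefficient map on $V$ does not follow from it.

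The missing step is where all the content of the lemma lives. (The paper itself does not reprove the lemma; it cites Musin \cite{mus09} and records precisely this one idea, that the span of the $f_j$ contains no degree-one polynomial.) Here is how it goes. Suppose $g=\sum_j c_jf_j$ coincides on the sphere with some $\ell\in\mathrm{Harm}_1$. Then $g-\ell$ vanishes on $\mathbb{S}^{n-1}$ and has degree at most $2$, so it is a constant multiple of $\|x\|^2-1$, which has no linear part; hence $\ell$ must equal the degree-one part of $g$, namely $\ell(x)=-(\alpha+\beta)(v,x)$ with $v=\sum_j c_jx_j$. Evaluating at $x_i$ gives $c_i(1-\alpha)(1-\beta)=-(\alpha+\beta)(v,x_i)$; multiplying by $c_i$ and summing over $i$ yields $(1-\alpha)(1-\beta)\sum_i c_i^2=-(\alpha+\beta)\|v\|^2\le 0$, and since $(1-\alpha)(1-\beta)>0$ this forces $c=0$ and hence $\ell=0$. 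This is the only place where $\alpha+\beta\ge 0$ and the positivity of the Gram form enter; with it, $V\cap\mathrm{Harm}_1=\{0\}$, and Lemma \ref{lem:lin} gives $N\le\dim V\le\frac{n(n+1)}{2}$ as you intended.
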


For the next lemma, suppose $X=\{x_1,\ldots, x_N\}$ is an $s$-distance set in a metric space $M$. $\tau$ is a distance function and $\tau(x,x)=\tau_0$ for all $x\in M$. Assume $\tau$ admits only real values $\beta_1,\ldots,\beta_s$ over all ordered pairs of points from $X$. Let $V_{s-1}$ be a linear space of real functions spanned by all $f_{Q,\xi}(x)=Q(\tau(x,\xi))$, where $\xi$ is a fixed point from $M$ and $Q$ is a real polynomial of degree $s-1$. Let $dim\ V_{s-1}=K$.

For each $l$, $1\leq l\leq s$, we can define the adjacency matrix $\Phi_l$, where the $ij$-entry of this matrix is 1 if $\tau(x_i,x_j)=\beta_l$ and 0 otherwise. Denote $k_l=-\prod_{i\neq l} \frac{\tau_0-\beta_i}{\beta_l-\beta_i}$.

The following lemma was essentially proved by Nozaki \cite{noz11} as the main ingredient in his proof of the generalization of \cite{lar77}.

\begin{lem}\label{lem:eig}
If the size of $X$ is at least $K$, then for each $l$, $1\leq l\leq s$, $k_l$ is an eigenvalue of the adjacency matrix $\Phi_l$ with multiplicity at least $|X|-K$.
\end{lem}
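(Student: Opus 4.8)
The plan is to produce a single degree-$(s-1)$ polynomial whose associated matrix is exactly $\Phi_l-k_l I$, and then to bound the rank of that matrix through Lemma~\ref{lem:lin}. The driving observation is that $-k_l$ is the value at $\tau_0$ of the Lagrange interpolation polynomial that isolates the distance $\beta_l$. Concretely, fix $l$ and set
$$L_l(t)=\prod_{i\neq l}\frac{t-\beta_i}{\beta_l-\beta_i},$$
a polynomial of degree $s-1$ with $L_l(\beta_j)=\delta_{lj}$ for $1\leq j\leq s$ and, by the definition of $k_l$, with $L_l(\tau_0)=\prod_{i\neq l}\frac{\tau_0-\beta_i}{\beta_l-\beta_i}=-k_l$. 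This already uses that $\tau_0\neq\beta_i$ for every $i$, so that both $k_l$ and $L_l$ are well defined.

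Next I would form the $N\times N$ matrix $M_l$ with entries $(M_l)_{ij}=L_l(\tau(x_i,x_j))$ and identify it explicitly. For $i\neq j$ we have $\tau(x_i,x_j)=\beta_m$ for some $m$, so $(M_l)_{ij}=L_l(\beta_m)=\delta_{lm}$, which is precisely the $ij$-entry of $\Phi_l$; for $i=j$ we have $(M_l)_{ii}=L_l(\tau_0)=-k_l$. Since the diagonal of $\Phi_l$ vanishes, this yields the identity $M_l=\Phi_l-k_l I$.

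The conclusion then follows from the rank bound. Writing $f_i(x)=L_l(\tau(x,x_i))=f_{L_l,x_i}(x)$, each $f_i$ lies in $V_{s-1}$ since $L_l$ has degree $s-1$, and $M_l=(f_i(x_j))$. Lemma~\ref{lem:lin} gives $\operatorname{rank}(M_l)\leq\dim V_{s-1}=K$, hence $\operatorname{rank}(\Phi_l-k_l I)\leq K$. Therefore the kernel of $\Phi_l-k_l I$, which is the $k_l$-eigenspace of $\Phi_l$, has dimension at least $N-K=|X|-K$, exactly the claimed multiplicity.

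I do not expect a serious obstacle; the entire content lies in choosing $L_l$ correctly and in the bookkeeping that makes $M_l$ coincide with $\Phi_l-k_l I$ rather than with some other perturbation of $\Phi_l$. The one point needing care is the diagonal: the argument succeeds precisely because the single value $L_l(\tau_0)=-k_l$ occupies every diagonal entry, so $M_l$ differs from $\Phi_l$ only by the scalar matrix $-k_l I$, which is what converts a rank bound into an eigenvalue-multiplicity statement. I would also record that the conclusion is substantive only when $|X|>K$ (for $|X|=K$ the promised multiplicity is $0$), and that the symmetry of $\tau$ is what makes $(f_i(x_j))$ and $(L_l(\tau(x_i,x_j)))$ the same matrix.
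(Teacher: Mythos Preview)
Your proof is correct and is essentially the same argument as the paper's: the paper also defines $f_j^l(x)=\prod_{i\neq l}\frac{\tau(x,x_j)-\beta_i}{\beta_l-\beta_i}$, observes that the resulting matrix is $\Phi_l-k_lI$, and applies Lemma~\ref{lem:lin} to bound its rank by $K$. Your presentation is in fact slightly cleaner, since you name the Lagrange polynomial $L_l$ explicitly and handle the diagonal carefully, but there is no substantive difference in method.
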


\begin{proof}
Define functions $f_j^l(x)=\prod_{i\neq l}\frac {\tau(x,x_j)-\beta_i}{\beta_l-\beta_i}$. $f_j^l(x_i)$ may take only three different values: if $i=j$ then $f_i^l(x_i)=\prod_{i\neq l}\frac {\tau_0-\beta_i}{\beta_l-\beta_i}=-k_l$; if $i\neq j$ and $\tau(x_i,x_j)=\beta_l$ then $f_i^l(x_i)=1$;  if $i\neq j$ and $\tau(x_i,x_j)\neq\beta_l$ then $f_i^l(x_i)=0$. Hence, for a fixed $l$, matrix $\left(f_i^l(x_i)\right)$ is $-k_l I + \Phi_l$. By Lemma \ref{lem:lin} the rank of this matrix is not greater than $K$ and, therefore, 0 is the eigenvalue of $-k_l I + \Phi_l$ with multiplicity at least $|X|-K$. From here we conclude that $\Phi_l$ has eigenvalue $k_l$ with multiplicity at least $|X|-K$.
\end{proof}

Using this lemma, it is not hard to prove that $M_{\frac 1 a} (n)>2n+2$ implies that $a$ is an odd natural number. Dimension $K$ in this scenario is $n+1$ and the eigenvalue $\frac {1+\frac 1 a}{\frac 1 a +\frac 1 a}=\frac {a+1}{2}$ will be a root of multiplicity $\geq |X|-(n+1)>\frac 1 2 |X|$ of the characteristic polynomial of the adjacency matrix $\Phi$. This polynomial has degree $|X|$ and integer coefficients. Any algebraic conjugate of $\frac {a+1}{2}$ must be a root of this polynomial with the same multiplicity but it would make the total degree greater than $|X|$. Hence there are no algebraically conjugate numbers and $\frac {a+1}{2}$ is integer. Proving the same condition with the assumption $M_{\frac 1 a} (n)> 2n$ as in \cite{lem73} takes a little more effort.

The numbers $k_l$ from Lemma \ref{lem:eig} satisfy the following useful properties (see \cite[Theorem 2.6]{mus11}).

\begin{lem}\label{lem:interp}
For any real polynomial $P$ of degree $\leq s-1$, $P(\tau_0)+\sum_{l=1}^s k_l P(\beta_l)= 0$.
\end{lem}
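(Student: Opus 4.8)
The plan is to recognize the identity as a statement about Lagrange interpolation at the $s+1$ nodes $\tau_0,\beta_1,\ldots,\beta_s$. These nodes are pairwise distinct: the $\beta_l$ are the $s$ distinct values of the distance function on ordered pairs of distinct points of $X$, and $\tau_0$ is the self-distance, which is not among them (this distinctness is also implicit in the very definition of the $k_l$, whose denominators contain $\beta_l-\beta_i$). So interpolation at these nodes is well defined.

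First I would write down the Lagrange interpolation formula for an arbitrary real polynomial $P$ with $\deg P\le s$ at these $s+1$ nodes:
$$P(t) = P(\tau_0)\prod_{i=1}^s \frac{t-\beta_i}{\tau_0-\beta_i} + \sum_{l=1}^s P(\beta_l)\,\frac{t-\tau_0}{\beta_l-\tau_0}\prod_{i\neq l}\frac{t-\beta_i}{\beta_l-\beta_i}.$$
Both sides are polynomials of degree $\le s$ that agree at the $s+1$ nodes, hence they are identically equal.

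Next I would compare the coefficient of $t^s$ on the two sides. If $\deg P\le s-1$, the left-hand side contributes $0$. On the right, the $l$-th basis polynomial $\frac{t-\tau_0}{\beta_l-\tau_0}\prod_{i\ne l}\frac{t-\beta_i}{\beta_l-\beta_i}$ has numerator that is monic of degree $s$, so its $t^s$-coefficient is $\big((\beta_l-\tau_0)\prod_{i\ne l}(\beta_l-\beta_i)\big)^{-1}$, while the first term has $t^s$-coefficient $P(\tau_0)\big/\prod_{i=1}^s(\tau_0-\beta_i)$. Setting the total equal to $0$ and clearing the common denominator $\prod_{i=1}^s(\tau_0-\beta_i)$, a one-line manipulation using $\prod_{i=1}^s(\tau_0-\beta_i)/(\beta_l-\tau_0) = -\prod_{i\ne l}(\tau_0-\beta_i)$ converts the coefficient of $P(\beta_l)$ into exactly $k_l = -\prod_{i\ne l}\frac{\tau_0-\beta_i}{\beta_l-\beta_i}$. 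This yields $P(\tau_0) + \sum_{l=1}^s k_l P(\beta_l) = 0$, as claimed.

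There is no genuine obstacle; the only thing that requires care is the sign bookkeeping in the last step and the distinctness of the nodes. As a cross-check, I would note that the same identity drops out of the residue theorem applied to $\frac{P(t)}{(t-\tau_0)\prod_{i=1}^s(t-\beta_i)}$: when $\deg P\le s-1$ this rational function is $O(1/t^2)$ at infinity, so its residues sum to zero, and computing the residues at $\tau_0$ and at each $\beta_l$ gives precisely the stated relation.
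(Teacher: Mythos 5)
Your proof is correct, and it rests on the same underlying fact as the paper's --- Lagrange interpolation at the distance values --- but it is executed differently. The paper interpolates $P$ only at the $s$ nodes $\beta_1,\dots,\beta_s$: the difference $P(x)-\sum_{l}P(\beta_l)\prod_{i\neq l}\frac{x-\beta_i}{\beta_l-\beta_i}$ has degree at most $s-1$ and $s$ roots, hence vanishes identically, and evaluating it at $x=\tau_0$ produces the identity with the $k_l$ appearing immediately. You instead interpolate at the $s+1$ nodes $\tau_0,\beta_1,\dots,\beta_s$ and read off the vanishing of the $t^s$-coefficient; this costs you an extra hypothesis (that $\tau_0$ is distinct from every $\beta_l$, which the paper's route never needs) together with the sign bookkeeping that converts $\bigl((\beta_l-\tau_0)\prod_{i\neq l}(\beta_l-\beta_i)\bigr)^{-1}\prod_{i}(\tau_0-\beta_i)$ into $k_l$. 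The hypothesis is harmless here: it holds in every application in the paper and is in any case required by Corollary \ref{cor:general}, where one divides by $\prod_{l}(\tau_0-\beta_l)$. Your residue-theorem remark is an equivalent repackaging of the same partial-fraction identity. One genuine benefit of your version is that the identical leading-coefficient comparison applied to $P(t)=t^s$ (where the left-hand side contributes $1$ rather than $0$) yields Lemma \ref{lem:interp_s} at the same stroke, so both interpolation lemmas fall out of a single computation, whereas the paper proves them separately.
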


\begin{proof}
The polynomial $P(x)-\sum_{l=1}^s \prod_{i\neq l}\frac {x-\beta_i}{\beta_l-\beta_i} P(\beta_l)$ is 0 at all $\beta_i$. Since $P$ has $s$ roots, it must be identically 0 and, in particular, must be 0 at $\tau_0$.
\end{proof}

\begin{lem}\label{lem:interp_s}
$\tau_0^s+\sum_{l=1}^s k_l \beta_l^s= \prod_{l=1}^s (\tau_0-\beta_l)$.
\end{lem}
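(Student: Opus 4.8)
The plan is to deduce this identity directly from Lemma \ref{lem:interp} by applying that lemma to a cleverly chosen polynomial of degree $\leq s-1$. Since $\prod_{l=1}^s(\tau_0-\beta_l)$ is the desired right-hand side and $\tau_0^s$ appears on the left, the natural candidate is $P(x)=x^s-\prod_{l=1}^s(x-\beta_l)$. First I would check that $P$ genuinely has degree at most $s-1$: the product $\prod_{l=1}^s(x-\beta_l)$ is monic of degree $s$, so subtracting it from $x^s$ cancels the leading term and leaves a polynomial of degree $\leq s-1$. Hence Lemma \ref{lem:interp} applies and gives $P(\tau_0)+\sum_{l=1}^s k_l P(\beta_l)=0$.

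Next I would evaluate the two ingredients of this relation. For each $l$, the product $\prod_{j=1}^s(\beta_l-\beta_j)$ contains the factor $\beta_l-\beta_l=0$, so $P(\beta_l)=\beta_l^s$. On the other hand $P(\tau_0)=\tau_0^s-\prod_{l=1}^s(\tau_0-\beta_l)$ by definition. Substituting both into the identity from Lemma \ref{lem:interp} yields $\tau_0^s-\prod_{l=1}^s(\tau_0-\beta_l)+\sum_{l=1}^s k_l\beta_l^s=0$, which rearranges at once to the claimed $\tau_0^s+\sum_{l=1}^s k_l\beta_l^s=\prod_{l=1}^s(\tau_0-\beta_l)$.

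There is essentially no obstacle here; the proof is a one-line consequence of the previous lemma once the right test polynomial is spotted. The only point requiring a word of care is that the values $\beta_1,\dots,\beta_s$ be pairwise distinct, so that the coefficients $k_l$ and the Lagrange-type interpolation underlying Lemma \ref{lem:interp} are well defined — but this is already part of the standing hypothesis that $X$ is an $s$-distance set taking exactly the $s$ distinct values $\beta_1,\dots,\beta_s$.
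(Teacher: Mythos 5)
Your proof is correct, and it is essentially the paper's argument in a slightly repackaged form: the paper directly observes that $x^s$ minus its Lagrange interpolant at the $\beta_l$ is the monic polynomial $\prod_{l=1}^s(x-\beta_l)$ and evaluates at $\tau_0$, while you reach the same identity by feeding $P(x)=x^s-\prod_{l=1}^s(x-\beta_l)$ into Lemma \ref{lem:interp}. Both rest on the same interpolation fact, and your version has the minor virtue of reusing the previous lemma instead of re-deriving it.
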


\begin{proof}
The polynomial $x^s-\sum_{l=1}^s \prod_{i\neq l}\frac {x-\beta_i}{\beta_l-\beta_i} \beta_l^s$ is 0 at all $\beta_i$. Since this polynomial has $s$ roots and its leading coefficient is 1, it must be the product of all $x-\beta_l$. Particularly, at $\tau_0$ it must be the product of all $\tau_0-\beta_l$.
\end{proof}

\section{General bound for $s$-distance sets}\label{sect:general}

In this section we derive a new general method to find upper bounds on the size of $s$-distance sets in compact two-point homogeneous spaces. Our setup is similar to the one for Lemma \ref{lem:eig}. We assume that $X=\{x_1,\ldots, x_N\}$ is an $s$-distance set in a compact two-point homogeneous metric space $M$ equipped with a distance function $\tau(x,y)$, $\tau(x,x)=\tau_0$, and zonal spherical functions $P_k(t)$ as described in Subsection \ref{sub:zon}. Assume $\tau$ admits only values $\{\beta_1,\ldots,\beta_s\}$ on pairs of points from $X$. For each $l$, $\Phi_l$ is an adjacency matrix for $\beta_l$. 

\begin{theorem}\label{thm:general}
Let $X$ be an $s$-distance set in $M$, $s\geq 2$, and let the distance function $\tau$ admit only values $\{\beta_1,\ldots,\beta_s\}$ on pairs of points from $X$. Let $K$ be the dimension of a linear space of real functions spanned by all $f_{Q,\xi}(x)=Q(\tau(x,\xi))$, where $\xi$ is a fixed point from $M$ and $Q$ is a real polynomial of degree $s-1$. Denote  $k_l=-\prod_{i\neq l} \frac{\tau_0-\beta_i}{\beta_l-\beta_i}$. Assume $|X|>1+K(s-1)$ and let $P(t)$ be any non-negative linear combination of zonal spherical functions $P_k(t)$ of the compact two-point homogeneous space $M$. Then
$$0 \leq (|X|-1-K(s-1))(P(\tau_0)+P(\beta_1)k_1+\ldots+P(\beta_s)k_s)\leq |X|P(\tau_0).$$
\end{theorem}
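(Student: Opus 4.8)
The plan is to combine the positive-semidefiniteness of the Gram-type matrix $\left(P(\tau(x_i,x_j))\right)$ with an eigenvalue count coming from the polynomial method, much as in Lemma~\ref{lem:eig} but applied to $s$ adjacency matrices simultaneously. First I would write $A = \left(P(\tau(x_i,x_j))\right)$. Since $P$ is a non-negative combination of the $P_k$, Proposition~\ref{prop:psd} (applied termwise and summed) gives $A\succeq 0$, hence $\operatorname{tr}(A)\ge 0$ and $\mathbf{1}^{\top}A\,\mathbf{1}\ge 0$; the latter two facts are exactly what the two inequalities in the conclusion will come from. The diagonal of $A$ is $P(\tau_0)$ in each entry, and off-diagonally $A = \sum_{l=1}^s P(\beta_l)\Phi_l$, so $\mathbf{1}^{\top}A\,\mathbf{1} = |X|P(\tau_0) + \sum_l P(\beta_l)\,(\mathbf{1}^{\top}\Phi_l\mathbf{1})$, where $\mathbf{1}^{\top}\Phi_l\mathbf{1}$ counts ordered pairs at distance $\beta_l$.

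The key step is to control the quantity $P(\tau_0)+\sum_l k_l P(\beta_l)$ via the rank bound. By Lemma~\ref{lem:eig} (or rather the proof technique behind it), for each $l$ the matrix $\Phi_l$ has $k_l$ as an eigenvalue with multiplicity at least $|X|-K$ once $|X|\ge K$; equivalently $-k_lI+\Phi_l$ has rank $\le K$. Stacking: the matrix $B := A - P(\tau_0)I - \sum_{l=1}^s P(\beta_l)k_l\cdot\frac{1}{?}\dots$ — more precisely, consider the functions $f_j(x) = P(\tau(x,x_j)) - \sum_{l=1}^s P(\beta_l)\,g_j^l(x)$ where $g_j^l$ are the Lagrange-type interpolation functions from the proof of Lemma~\ref{lem:eig}. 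Each $g_j^l$ lies in $V_{s-1}$, so $f_j$ lies in the span of $V_{s-1}$ together with the single function $P(\tau(x,x_j))$; actually the cleanest route is to note that $P(\tau(x,x_j))$ restricted to $X$ is determined by which $\Phi_l$'s contain the pair, plus the diagonal, so $A = P(\tau_0)I + \sum_l P(\beta_l)\Phi_l$ and I can instead directly argue about the eigenspaces: the common eigenvector $\mathbf{1}$ aside, $A$ acts on a space of codimension at most $1+K(s-1)$ in a controlled way, because each $\Phi_l$ agrees with the scalar $k_l$ on a subspace of codimension $\le K$, so all $s$ of them simultaneously agree with scalars on a subspace $W$ of codimension $\le K(s-1)$ (minus $1$ for $\mathbf{1}$), and on $W$ we have $A v = \bigl(P(\tau_0)+\sum_l k_l P(\beta_l)\bigr)v$. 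Call this scalar $\lambda$. Then $A\succeq 0$ forces $\lambda\ge 0$ whenever $\dim W \ge 1$, i.e. whenever $|X|>1+K(s-1)$, giving the left inequality after multiplying by the positive factor $|X|-1-K(s-1)$.

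For the right inequality I would use $\operatorname{tr}(A) = |X|P(\tau_0)$ together with the eigenvalue information: $\lambda$ occurs with multiplicity at least $\dim W \ge |X|-1-K(s-1)$, all eigenvalues of $A$ are $\ge 0$, and the remaining eigenvalues sum to a nonnegative quantity, so $(|X|-1-K(s-1))\lambda \le \operatorname{tr}(A) = |X|P(\tau_0)$. Combining with $\lambda\ge 0$ yields the stated chain. The main obstacle I anticipate is the bookkeeping in the middle step: verifying that the "$-1$" is legitimate, i.e. that $\mathbf{1}$ can always be peeled off even when it happens to lie in $W$ already (it does, which is why the codimension is $K(s-1)$ not $K(s-1)+1$), and carefully justifying that $P(\tau(x,x_j))$ expressed through the $\Phi_l$ plus a rank-$\le K(s-1)$ correction really does leave $A$ acting as the scalar $\lambda=P(\tau_0)+\sum_l k_l P(\beta_l)$ on the large common eigenspace — essentially an application of Lemma~\ref{lem:interp} to see that this $\lambda$ is the "right" scalar, since for $P$ of degree $\le s-1$ that lemma already gives $\lambda=0$, and for higher-degree $P$ one decomposes $P$ into its degree $\le s-1$ part plus a multiple of $\prod(t-\beta_l)$, the latter vanishing at every $\beta_l$.
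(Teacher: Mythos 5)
Your overall architecture is the same as the paper's: form $P(G)=P(\tau_0)I+\sum_{l}P(\beta_l)\Phi_l$, note it is positive semidefinite by Proposition \ref{prop:psd}, exhibit a large subspace $W$ on which it acts as the scalar $\lambda=P(\tau_0)+\sum_l k_lP(\beta_l)$, and conclude $\lambda\ge 0$ together with $(\dim W)\,\lambda\le \operatorname{tr}(P(G))=|X|P(\tau_0)$. All of that is fine. The genuine gap is the codimension count for $W$. As written, your argument intersects the $s$ eigenspaces $S_1,\ldots,S_s$, each of codimension at most $K$ by Lemma \ref{lem:eig}, which only yields codimension at most $Ks$ --- hence the weaker multiplicity bound $|X|-Ks$, not $|X|-1-K(s-1)$. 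Your parenthetical ``minus $1$ for $\mathbf{1}$'' and your closing worry about whether $\mathbf{1}$ can be ``peeled off'' do not address this: the $+1$ in $1+K(s-1)$ is simply the codimension of $\ker J$ (the hyperplane orthogonal to $\mathbf{1}$), which costs exactly one dimension and causes no trouble. What actually has to be shown is that one of the $s$ eigenspace conditions comes for free.

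The missing idea --- the one nontrivial step in the paper's proof --- is this: set $S=S_0\cap S_1\cap\cdots\cap S_{s-1}$, where $S_0=\ker J$ and $S_l$ is the $k_l$-eigenspace of $\Phi_l$ for $l\le s-1$ only; this has codimension at most $1+K(s-1)$. For $v\in S$ the identity $I+\Phi_1+\cdots+\Phi_s=J$ gives $\Phi_s v=(-1-k_1-\cdots-k_{s-1})v$, and Lemma \ref{lem:interp} applied to the constant polynomial gives $1+k_1+\cdots+k_s=0$, so that eigenvalue is exactly $k_s$ and $S\subseteq S_s$ automatically. Without this exchange of one codimension-$K$ condition for the codimension-$1$ condition $S_0$, the claimed bound $1+K(s-1)$ is unjustified. (Your final remark about decomposing $P$ into its degree-$(s-1)$ part plus a multiple of $\prod_l(t-\beta_l)$ is not needed for the theorem itself: the scalar $\lambda$ appears in the statement as is, and its evaluation via Lemmas \ref{lem:interp} and \ref{lem:interp_s} is only used later, in Corollary \ref{cor:general}.)
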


\begin{proof}
By Lemma \ref{lem:eig}, $k_l$ is an eigenvalue of the adjacency matrix $\Phi_l$ for $\beta_l$ and its multiplicity is at least $|X|-K$. For each $l$ denote the eigenspace of $\Phi_l$ corresponding to $k_l$ by $S_l$ and the eigenspace of $J$ (matrix of all 1's) corresponding to 0 by $S_0$. We consider the intersection $S=S_0\cap S_1\ldots\cap S_{s-1}$. Its codimension is not greater than the sum of codimensions of $S_0$, $\ldots$, $S_{s-1}$, hence it's not greater than $1+K(s-1)$. Therefore, its dimension is at least $|X|-K(s-1)-1$. Any vector from $S$ is an eigenvector of $\Phi_i$, $1\leq i\leq s-1$, with the eigenvalue $k_i$, an eigenvector of $J$ with eigenvalue 0, and an eigenvector of $I$ with eigenvalue 1. From the condition on matrices $I+\Phi_1+\ldots+\Phi_s=J$, we get that any vector of $S$ is also an eigenvector of $\Phi_s$ with the eigenvalue $-1-k_1-\ldots -k_{s-1}$. By Lemma \ref{lem:interp} for degree 0, this eigenvalue is equal to $k_s$. Therefore, any vector from $S$ belongs to $S_s$ as well.

The Gram matrix of $X$ is $G=I+\beta_1\Phi_1+\ldots+\beta_s\Phi_s$. For a real polynomial $P$, denote

$$P(G):=P(\tau_0)I+P(\beta_1)\Phi_1+\ldots+P(\beta_s)\Phi_s.$$

Note that if vector $v$ is in $S$, then

$$P(G)v=(P(\tau_0)+P(\beta_1)k_1+\ldots+P(\beta_s)k_s)v.$$

Hence any vector $v\in S$ is an eigenvector of $P(G)$ with the eigenvalue $P(\tau_0)+P(\beta_1)k_1+\ldots+P(\beta_s)k_s$. Therefore, the multiplicity $m$ of this eigenvalue of $P(G)$ is at least $|X|-1-K(s-1)$.

If $P$ is any non-negative linear combination of zonal spherical functions, then $P(G)\succeq 0$. Then any eigenvalue of $P(G)$ is a non-negative real number which proves the first inequality in the statement of the theorem. The sum of eigenvalues of $P(G)$ equals $tr(P(G))=|X|P(\tau_0)$ and, since all eigenvalues are non-negative,

$$|X|P(\tau_0)\geq m(P(\tau_0)+P(\beta_1)k_1+\ldots+P(\beta_s)k_s)\geq$$

$$\geq (|X|-1-K(s-1))(P(\tau_0)+P(\beta_1)k_1+\ldots+P(\beta_s)k_s)$$ which proves the second inequality of the theorem.
\end{proof}

\begin{remark}\label{rem:regular}
By a regular $s$-distance set we mean an $s$-distance set such that the distribution of distances from any point of this set is the same. The result of Theorem \ref{thm:main} and all subsequent corollaries may be improved if the set is known to be regular. In this case, $S_1\ldots\cap S_{s-1}$ already is a subset of $S_0$ and the codimension will be no greater than $K(s-1)$. Numerators in all bounds of Corollaries \ref{cor:general}-\ref{cor:2-dist} will be smaller by one with this condition.
\end{remark}

For the next statement we assume that $P_s(t)$ is a real polynomial of degree $s$ and use it in Theorem \ref{thm:general}. Let the highest degree coefficient of $P_s(t)$ be $c_s$.

\begin{corollary}\label{cor:general}
With the notation used for Theorem \ref{thm:general},
$$|X|\leq \dfrac {1+K(s-1)}{1-\dfrac {P_s(\tau_0)}{c_s \prod_{l=1}^s (\tau_0-\beta_l)}},$$

if $|X|>1+K(s-1)$ and the right hand side of the inequality is positive.
\end{corollary}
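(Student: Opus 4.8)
The plan is to apply Theorem \ref{thm:general} with the single non-negative combination of zonal spherical functions $P=P_s$, and then to rewrite the crucial quantity $P_s(\tau_0)+\sum_{l=1}^s k_l P_s(\beta_l)$ in closed form. First I would split $P_s(t)=c_s t^s+R(t)$ with $\deg R\le s-1$: Lemma \ref{lem:interp} kills the contribution of $R$, since it gives $R(\tau_0)+\sum_{l=1}^s k_l R(\beta_l)=0$, while Lemma \ref{lem:interp_s} evaluates the leading term. Adding the two identities yields
$$P_s(\tau_0)+\sum_{l=1}^s k_l P_s(\beta_l)=c_s\prod_{l=1}^s(\tau_0-\beta_l).$$

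Next I would substitute this identity into the right-hand inequality of Theorem \ref{thm:general}, obtaining
$$\bigl(|X|-1-K(s-1)\bigr)\,c_s\prod_{l=1}^s(\tau_0-\beta_l)\;\le\;|X|\,P_s(\tau_0).$$
Collecting the terms containing $|X|$ on one side turns this into $|X|\bigl(c_s\prod_{l=1}^s(\tau_0-\beta_l)-P_s(\tau_0)\bigr)\le\bigl(1+K(s-1)\bigr)c_s\prod_{l=1}^s(\tau_0-\beta_l)$, and dividing first by $c_s\prod_{l=1}^s(\tau_0-\beta_l)$ and then by $1-P_s(\tau_0)\big/\bigl(c_s\prod_{l=1}^s(\tau_0-\beta_l)\bigr)$ produces exactly the claimed bound.

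The one place that needs attention is justifying that these two divisions preserve the direction of the inequality. The left-hand inequality of Theorem \ref{thm:general}, together with the hypothesis $|X|>1+K(s-1)$, already forces $c_s\prod_{l=1}^s(\tau_0-\beta_l)=P_s(\tau_0)+\sum_{l=1}^s k_l P_s(\beta_l)\ge 0$; since the asserted bound divides by this quantity it is implicitly nonzero, hence strictly positive, so the first division is legitimate. The hypothesis that the right-hand side of the bound is positive, combined with $1+K(s-1)>0$, is precisely the assertion that $1-P_s(\tau_0)\big/\bigl(c_s\prod_{l=1}^s(\tau_0-\beta_l)\bigr)>0$, which validates the second division. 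I expect this sign bookkeeping to be the only real (and rather minor) obstacle; the rest is the elementary algebra above, together with the observation that $P_s$, being a non-negative combination of the zonal spherical functions $P_0,\dots,P_s$ with $\deg P_s=s$, is an admissible choice of $P$ in Theorem \ref{thm:general}.
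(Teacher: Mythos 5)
Your proposal is correct and follows essentially the same route as the paper: decompose $P_s(t)=c_st^s+Q(t)$, use Lemmas \ref{lem:interp} and \ref{lem:interp_s} to evaluate $P_s(\tau_0)+\sum_l k_lP_s(\beta_l)=c_s\prod_l(\tau_0-\beta_l)$, and rearrange the second inequality of Theorem \ref{thm:general}. Your extra care about the sign of the two divisions is a slightly more explicit version of what the paper does implicitly, but it is not a different argument.
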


\begin{proof}
We can write $P_s(t)=c_st^s+Q(t)$, where $Q$ has degree $\leq s-1$. Then

$$(P(\tau_0)+P(\beta_1)k_1+\ldots+P(\beta_s)k_s)=c_s(\tau_0^s+\sum_{l=1}^s k_l \beta_l^s)+(Q(\tau_0)+\sum_{l=1}^s k_l Q(\beta_l)).$$

By Lemmas \ref{lem:interp} and \ref{lem:interp_s}, this is equal to $c_s \prod_{l=1}^s (\tau_0-\beta_l)$. Due to the first inequality from Theorem \ref{thm:general} this value is positive. To finish the proof we transform the second inequality from Theorem \ref{thm:general}:

$$\left(1-\frac {1+K(s-1)}{|X|}\right)c_s \prod_{l=1}^s (\tau_0-\beta_l)\leq P(\tau_0),$$

$$1-\frac {P(\tau_0)}{c_s \prod_{l=1}^s (\tau_0-\beta_l)}\leq \frac {1+K(s-1)}{|X|},$$

$$|X|\leq \dfrac {1+K(s-1)}{1-\dfrac {P_s(\tau_0)}{c_s \prod_{l=1}^s (\tau_0-\beta_l)}}.$$

\end{proof}

\section{Upper bounds for two-distance sets}\label{sect:2-dist}

\subsection{General spherical bounds}\label{sub:sphere}

In this section we show how to obtain new upper bounds for spherical two-distance sets. Firstly, we rewrite Corollary \ref{cor:general} for the specific case of spherical sets.

\begin{corollary}\label{cor:spherical}
Let $X$ be a spherical $s$-distance set in $\mathbb{S}^{n-1}$ with scalar products $\beta_1,\ldots,\beta_s$. Then
$$|X|\leq \dfrac {1+\left(\binom{n+s-2}{s-1}+\binom{n+s-3}{s-2}\right)(s-1)}{1-\dfrac {1}{\prod_{i=2}^s \frac {n+2i-4}{n+i-3} \prod_{l=1}^s (1-\beta_l)}},$$

if the right hand side is positive.
\end{corollary}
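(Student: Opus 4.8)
The plan is to specialize Corollary \ref{cor:general} by substituting the explicit data for the spherical case and the Gegenbauer polynomial $G_s^{(n)}(t)$ in place of the generic degree-$s$ zonal function $P_s(t)$. Three ingredients need to be identified: the value $\tau_0$, the dimension $K$, and the leading coefficient $c_s$ of the polynomial we use. On the sphere the distance function is the scalar product, so $\tau_0=(x,x)=1$; hence $\prod_{l=1}^s(\tau_0-\beta_l)=\prod_{l=1}^s(1-\beta_l)$ and $P_s(\tau_0)=G_s^{(n)}(1)=1$ by the normalization fixed in Subsection \ref{sub:zon}.

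Next I would compute $K$. By definition $K$ is the dimension of the space spanned by all functions $x\mapsto Q((x,\xi))$ with $Q$ of degree $\le s-1$ and $\xi$ fixed; this is exactly the space of spherical polynomials of degree $\le s-1$ on $\mathbb{S}^{n-1}$, whose dimension is $\binom{n+s-2}{s-1}+\binom{n+s-3}{s-2}$ (the $s-1$ analogue of the count used in the proof of Theorem \ref{thm:s-dist}). So $1+K(s-1)$ becomes the numerator displayed in the statement.

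The remaining point is the leading coefficient $c_s$ of $G_s^{(n)}(t)$. From the recursion $G_k^{(n)}(t)=\frac{(n+2k-4)\,t\,G_{k-1}^{(n)}(t)-(k-1)G_{k-2}^{(n)}(t)}{n+k-3}$, the top-degree term multiplies by $\frac{n+2k-4}{n+k-3}$ at each step (the lower-order term does not affect the leading coefficient), and since $G_1^{(n)}(t)=t$ has leading coefficient $1$, induction gives $c_s=\prod_{i=2}^s\frac{n+2i-4}{n+i-3}$. Plugging $\tau_0=1$, $P_s(\tau_0)=1$, this $c_s$, and $K$ into Corollary \ref{cor:general} yields precisely the claimed bound, valid whenever the right-hand side is positive (equivalently whenever the hypothesis $|X|>1+K(s-1)$ of Corollary \ref{cor:general} is compatible, which one notes in passing). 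I expect no real obstacle here; the only mild care needed is verifying the leading-coefficient product by induction and confirming that Schoenberg's theorem (cited in Subsection \ref{sub:zon}) guarantees $G_s^{(n)}$ itself is an admissible non-negative combination of zonal functions so that Corollary \ref{cor:general} applies with $P=P_s=G_s^{(n)}$.
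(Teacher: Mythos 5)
Your proposal follows the paper's proof exactly: identify $\tau_0=1$, $K=\binom{n+s-2}{s-1}+\binom{n+s-3}{s-2}$, and the leading coefficient $c_s=\prod_{i=2}^s\frac{n+2i-4}{n+i-3}$ of $G_s^{(n)}$, then substitute into Corollary \ref{cor:general} with $P=G_s^{(n)}$ (which is itself a zonal spherical function for the sphere, so no appeal to Schoenberg's converse is needed --- that theorem goes the other direction).

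One loose end: the corollary as stated drops the hypothesis $|X|>1+K(s-1)$ that Corollary \ref{cor:general} carries, and your parenthetical claim that positivity of the right-hand side is ``equivalently'' that hypothesis is not correct --- positivity of the right-hand side is a condition on $n$, $s$, and the $\beta_l$ alone, whereas $|X|>1+K(s-1)$ is a condition on the set. The paper closes this gap with one observation: since every $\beta_l<1$, the quantity $\prod_{i=2}^s\frac{n+2i-4}{n+i-3}\prod_{l=1}^s(1-\beta_l)$ is positive, so the denominator $1-\frac{1}{(\cdots)}$ is strictly less than $1$; hence whenever the right-hand side is positive it exceeds the numerator $1+K(s-1)$, and the stated bound holds trivially in the case $|X|\leq 1+K(s-1)$. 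You should add that sentence; everything else in your argument is fine.
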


\begin{proof}
For the spherical case $\tau(x,y)=(x,y)$ so $\tau_0=1$. The leading coefficient of the normalized Gegenbauer polynomial of degree $s$ is precisely $\prod_{i=2}^s \frac {n+2i-4}{n+i-3}$. Finally, the space of spherical polynomials of degree $\leq s-1$ has dimension $\binom{n+s-2}{s-1}+\binom{n+s-3}{s-2}$. The corollary is then obtained by substituting these values in Corollary \ref{cor:general}. In this case the denominator in the inequality is always less than 1 so it works even when $|X|\leq 1+\left(\binom{n+s-2}{s-1}+\binom{n+s-3}{s-2}\right)(s-1)$.
\end{proof}

By taking $s=2$ we get the result for two-distance sets.

\begin{corollary}\label{cor:2-dist}
Let $X$ be a spherical two-distance set in $\mathbb{S}^{n-1}$ with scalar products $\alpha,\beta$. Then
$$|X|\leq \dfrac {n+2}{1-\frac {n-1}{n(1-\alpha)(1-\beta)}},$$

if the right hand side is positive.
\end{corollary}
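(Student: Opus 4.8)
The plan is to specialize Corollary \ref{cor:spherical} to the case $s = 2$; no new idea is needed beyond substituting the relevant quantities. First I would record that for $s = 2$ the dimension of the space of spherical polynomials of degree $\leq s-1 = 1$ is $\binom{n+s-2}{s-1} + \binom{n+s-3}{s-2} = \binom{n}{1} + \binom{n-1}{0} = n + 1$, so the numerator $1 + \bigl(\binom{n+s-2}{s-1} + \binom{n+s-3}{s-2}\bigr)(s-1)$ collapses to $1 + (n+1)\cdot 1 = n + 2$, which matches $1 + K(s-1)$ with $K = n+1$.

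Next I would compute the denominator. The leading coefficient of the normalized degree-$2$ Gegenbauer polynomial is $\prod_{i=2}^{2} \frac{n + 2i - 4}{n + i - 3} = \frac{n}{n-1}$, which one can also read off directly from the explicit formula $G_2^{(n)}(t) = \frac{nt^2 - 1}{n-1}$ recorded in Subsection \ref{sub:zon}. With $\tau_0 = 1$ and scalar products $\beta_1 = \alpha$, $\beta_2 = \beta$, the product $\prod_{l=1}^{2}(\tau_0 - \beta_l) = (1-\alpha)(1-\beta)$, so
$$\frac{P_s(\tau_0)}{c_s \prod_{l=1}^{s}(\tau_0 - \beta_l)} = \frac{1}{\frac{n}{n-1}(1-\alpha)(1-\beta)} = \frac{n-1}{n(1-\alpha)(1-\beta)},$$
and substituting into Corollary \ref{cor:spherical} yields exactly the claimed bound $|X| \leq \dfrac{n+2}{1 - \frac{n-1}{n(1-\alpha)(1-\beta)}}$.

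Finally I would address the validity caveat. As observed in the proof of Corollary \ref{cor:spherical}, the subtracted fraction here is positive and $G_2^{(n)}(1) = 1$, so the denominator is always strictly less than $1$; hence the statement holds without separately assuming $|X| > 1 + K(s-1) = n+2$, since for $|X| \leq n+2$ the bound is automatic and otherwise we are in the range covered by Corollary \ref{cor:spherical}. One still needs the hypothesis ``if the right hand side is positive'': when $(1-\alpha)(1-\beta) \leq \frac{n-1}{n}$ the denominator is nonpositive and nothing is asserted. There is essentially no obstacle in this argument — it is a mechanical substitution — and the only place one could slip is in evaluating the one-term products $\prod_{i=2}^{2}$ and the binomial coefficients at $s=2$, so I would simply cross-check those against the explicit low-degree Gegenbauer formulas.
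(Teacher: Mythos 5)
Your proposal is correct and matches the paper's own treatment exactly: the paper derives Corollary \ref{cor:2-dist} by setting $s=2$ in Corollary \ref{cor:spherical}, and your evaluations of the binomial coefficients ($K=n+1$, numerator $n+2$), the leading coefficient $\tfrac{n}{n-1}$ of $G_2^{(n)}$, and the product $(1-\alpha)(1-\beta)$ are all right. Your handling of the validity caveat (the denominator being less than $1$, so the bound is vacuous or automatic when $|X|\leq n+2$) is also the same observation the paper makes inside the proof of Corollary \ref{cor:spherical}.
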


\subsection{Bounds for equiangular sets}\label{sub:equiang}

In this subsecton we assume that $X$ is an equiangular set in $\mathbb{S}^{n-1}$ with scalar products $\alpha=\frac 1 a,\beta=-\frac 1 a$. Using Corollary \ref{cor:2-dist} in a straightforward manner will not bring any new bounds for equiangular sets. In fact, we can only get $|X|\leq \frac {(n+2)n(1-\alpha^2)}{1-n\alpha^2}$, which is worse than the relative bound from Theorem \ref{thm:rel}. The trick here is to consider derived sets and apply the bound to them.

\begin{theorem}\label{thm:bound1}
For any $a\geq 3$, $$M_{\frac 1 a}(n)\leq \frac {n^2a^2+n-2}{n+a^2-2}<na^2.$$
\end{theorem}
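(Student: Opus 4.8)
The plan is to apply Corollary~\ref{cor:2-dist} not to $X$ itself but to a derived set, exactly as in the proof of Theorem~\ref{thm:2}. Fix a point $x_0 \in X$ and, replacing some points by their antipodes, arrange that $(y,x_0) = \frac 1 a$ for every $y \in X \setminus \{x_0\}$; call the resulting set $Z$. The derived set $Z_{x_0,\frac 1 a}$ then lies in $\mathbb{S}^{n-2}$, has $|X|-1$ points, and has scalar products $\alpha' = \frac{\frac 1 a - \frac 1{a^2}}{1-\frac 1{a^2}} = \frac 1{a+1}$ and $\beta' = \frac{-\frac 1 a - \frac 1{a^2}}{1-\frac 1{a^2}} = \frac{-1}{a-1}$, by the formula for derived sets in Subsection~\ref{sub:gegen}.

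Next I would substitute $n \mapsto n-1$, $\alpha \mapsto \frac 1{a+1}$, $\beta \mapsto \frac{-1}{a-1}$ into Corollary~\ref{cor:2-dist}. Here $(1-\alpha')(1-\beta') = \frac{a}{a+1}\cdot\frac{a}{a-1} = \frac{a^2}{a^2-1}$, so the denominator becomes
$$1 - \frac{n-2}{(n-1)\cdot \frac{a^2}{a^2-1}} = 1 - \frac{(n-2)(a^2-1)}{(n-1)a^2} = \frac{(n-1)a^2 - (n-2)(a^2-1)}{(n-1)a^2} = \frac{n + a^2 - 2}{(n-1)a^2}.$$
Since $a \geq 3$ and $n \geq 1$, this is positive, so Corollary~\ref{cor:2-dist} applies and gives
$$|X| - 1 = |Z_{x_0,\frac 1 a}| \leq \frac{(n-1)+2}{\frac{n+a^2-2}{(n-1)a^2}} = \frac{(n+1)(n-1)a^2}{n+a^2-2} = \frac{(n^2-1)a^2}{n+a^2-2}.$$
Adding $1$ and combining over the common denominator yields
$$M_{\frac 1 a}(n) \leq \frac{(n^2-1)a^2 + n + a^2 - 2}{n+a^2-2} = \frac{n^2 a^2 + n - 2}{n + a^2 - 2},$$
which is the claimed bound. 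The final strict inequality $\frac{n^2a^2+n-2}{n+a^2-2} < n a^2$ reduces, after clearing the positive denominator, to $n^2 a^2 + n - 2 < n^2 a^2 + n a^2(a^2-2)$, i.e. $n - 2 < n a^2(a^2-2)$; since $a \geq 3$ gives $a^2(a^2-2) \geq 63 \geq 1$, the right side is at least $n > n-2$, so this holds.

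The only subtlety, and the step worth stating carefully, is the passage from the bound on the derived set back to a bound on $X$: one must invoke the reduction (also used in Theorem~\ref{thm:2}) that without loss of generality $a$ is an odd natural number — otherwise Neumann's bound $M_{\frac 1 a}(n) \leq 2n$ already dominates — and one must check that the hypothesis $|X| - 1 > 1 + K(s-1) = n+1$ of Corollary~\ref{cor:general}/\ref{cor:2-dist} is either satisfied or else gives a value of $|X|$ already below the asserted bound; as noted in the proof of Corollary~\ref{cor:spherical}, for $s=2$ the denominator is always less than $1$, so the inequality in fact holds unconditionally and no case analysis on $|X|$ is needed. Everything else is the routine algebra above.
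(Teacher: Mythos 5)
Your proposal is correct and follows essentially the same route as the paper: pass to the derived set $Z_{x_0,\frac 1 a}\subset\mathbb{S}^{n-2}$ with scalar products $\frac 1{a+1}$, $\frac{-1}{a-1}$, apply Corollary~\ref{cor:2-dist} with $n-1$ in place of $n$, and add back the point $x_0$; all the algebra checks out. The only superfluous step is your appeal to the reduction to odd integer $a$ via Neumann's bound --- the argument works verbatim for every real $a\geq 3$ and the paper makes no such case split.
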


\begin{proof}
Just like in the proof of Theorem \ref{thm:2}, we choose an arbitrary point $x_0\in X$ and change some points of $X$ to their opposites so that $(y,x_0)=\frac 1 a$ for all $y\in X$, $y\neq x_0$. We denote the resulting equiangular set by $Z$ and consider the derived set $Z_{x_0,\frac 1 a}$. This is a two-distance set in $\mathbb{S}^{n-2}$ with scalar products $\frac 1 {a+1}$ and $\frac {-1}{a-1}$. By Corollary \ref{cor:2-dist},

$$|Z_{x_0,\frac 1 a}|\leq \frac {n+1}{1-\frac {n-2}{(n-1)(1-\frac 1 {a+1})(1+\frac 1 {a-1})}}=\frac {n+1}{1-\frac {(n-2)(a^2-1)}{(n-1)a^2}}=\frac{(n^2-1)a^2}{n+a^2-2}.$$

$$|X|=|Z_{x_0,\frac 1 a}|+1\leq \frac{(n^2-1)a^2}{n+a^2-2}+1=\frac {n^2a^2+n-2}{n+a^2-2}<\frac {n^2a^2+na^2(a^2-2)}{n+a^2-2}=na^2.$$
\end{proof}

When $n\sim a^2$, the bound in this theorem is asymptotically the same as Gerzon's bound, i.e. $\sim \frac 1 2 n^2$. The reason why this bound cannot be precise for the cases when Gerzon's bound is attained lies in the proof of Theorem \ref{thm:general}. Configurations where Gerzon's bound is attained are known to be determined by regular graphs so, as we mentioned in Remark \ref{rem:regular}, we should use codimension $\leq n+1$ instead of $n+2$ which eventually ended up in the numerator of Corollary \ref{cor:2-dist}. Using this corollary with the additional condition of regularity would lead to the very same Gerzon bound for $n=a^2-2$.

When $n\gg a^2$, it makes sense to use the upper bound $na^2$. For the size of equiangular sets with prescribed scalar products, the first bound linear in $n$ was proved in \cite{buk16} and it was asymptotically $\leq cn$, where $c=2^{O(a^2)}$. The bound from \cite{bal16} is much better, $1.93n$ unless $a=3$ for which it was already known to be $2n-2$ \cite{lem73}, but this bound holds only for sufficiently large $n$. Essentially our bound fills the gap between Gerzon's bounds $\frac {n(n+1)}{2}$, when $n$ is really close to $a^2$, and $1.93n$, when $n$ is sufficiently large with respect to $a^2$.

The $na^2$ bound of Theorem \ref{thm:bound1} can be improved by a more careful analysis of derived sets.

\secondbound*

\begin{proof}

We consider the set $T:=Z_{\frac 1 a, x_0}$ from the proof of Theorem \ref{thm:2}. It is a two-distance set in $\mathbb{S}^{n-2}$ with $|X|-1$ points and scalar products $\frac 1 {a+1}$ and $\frac {-1}{a-1}$. Choose an arbitrary vertex $x_{00}$ in this set. The remaining vertices are split into two sets depending on the scalar product with respect to $x_{00}$. Consider the derived sets $T_1=T_{\frac 1 {a+1}, x_{00}}$ and $T_2=T_{\frac {-1} {a-1}, x_{00}}$.

$T_1$ is a two-distance set in $\mathbb{S}^{n-3}$ with scalar products $\frac 1 {a+2}$ and $-\frac {a+3}{(a-1)(a+2)}$. Applying Corollary \ref{cor:2-dist} to $T_1$, we get that

$$|T_1|\leq \frac {n}{1-\frac {n-3}{(n-2)(1-\frac 1 {a+2})(1+\frac {a+3}{(a-1)(a+2)})}}=\frac {n}{1-\frac {n-3}{(n-2)\left(1+\frac {3a+5}{(a-1)(a+2)^2}\right)}}=$$

\begin{equation}\label{eqn:3}
=\frac {n(n-2)\left(1+\frac {3a+5}{(a-1)(a+2)^2}\right)}{1+(n-2)\frac {3a+5}{(a-1)(a+2)^2}}=\frac {n\left(1+\frac {3a+5}{(a-1)(a+2)^2}\right)} {\frac 1 {n-2}+\frac {3a+5}{(a-1)(a+2)^2}},
\end{equation}

which is strictly smaller than

$$\frac {n\left(1+\frac {3a+5}{(a-1)(a+2)^2}\right)} {\frac {3a+5}{(a-1)(a+2)^2}}=n\left(\frac {(a-1)(a+2)^2}{3a+5} + 1\right).$$

$T_2$ is a two-distance set in $\mathbb{S}^{n-3}$ with scalar products $-\frac 1 {a-2}$ and $\frac {a-3}{(a+1)(a-2)}$. By Corollary \ref{cor:2-dist} for $T_2$ and calculations similar to those for $T_1$,

\begin{equation}\label{eqn:4}
|T_2|\leq \frac {n(n-2)\left(1+\frac {3a-5}{(a+1)(a-2)^2}\right)}{1+(n-2)\frac {3a-5}{(a+1)(a-2)^2}} = \frac {n\left(1+\frac {3a-5}{(a+1)(a-2)^2}\right)} {\frac 1 {n-2}+\frac {3a-5}{(a+1)(a-2)^2}},
\end{equation}

which is strictly smaller than

$$n\left(\frac {(a+1)(a-2)^2}{3a-5} + 1\right).$$

Combining these two inequalities and taking into account points $x_0$ and $x_{00}$, we get that

$$|X|\leq n\left( \frac {(a-1)(a+2)^2}{3a+5}+\frac {(a+1)(a-2)^2}{3a-5}+2 \right)+2\leq n\left(\frac 2 3 a^2+\frac 4 7\right)+2,$$

since $\frac {(a-1)(a+2)^2}{3a+5}+\frac {(a+1)(a-2)^2}{3a-5}+2-\frac 2 3 a^2$ is a strictly decreasing function for $a\geq 3$ and its value at 3 is $\frac 4 7$.

\end{proof}

The bound of Theorem \ref{thm:bound2} is usually better than SDP bounds in \cite{bar14} and \cite{kin16} when $a$ is relatively small compared to $n$. For instance, using inequalities (\ref{eqn:3}) and (\ref{eqn:4}) from the proof of Theorem \ref{thm:bound2} we can prove the upper bound of 2224 for $M_{\frac 1 5} (137)$ which is much better than the SDP bound of 9529 from \cite{bar14} and the bound of 6743 for $M_{\frac 1 5}(400)$ which is significantly better than 17595 from \cite{kin16}.

The choice of a new vertex and split of the set into two two-distance subsets of smaller dimension may be done any number of times. By iterating this process $l$ times, i.e. choosing $l$ arbitrary points in $T$ and dividing all other vertices into $2^l$ groups depending on their distances to the chosen vertices, we can get asymptotically ($n/a
\rightarrow\infty$) the upper bound $\sim \frac {2^l}{2l+1} n a^2$ which is optimal for $l=1$.

\section{Proof of Theorems \ref{thm:main} and \ref{thm:equiang}}\label{sect:proofs}

\equiang*

\begin{proof}
Since $359=19^2-2$, $a\geq 19$. Assume $M(n)=M_{\frac 1 b} (n)$. If $b$ is not an odd natural number, $M(n)\leq 2n\leq 2((a+2)^2-3)<\frac {(a^2-2)(a^2-1)}{2}$ for all suitable $a$. There are three possible cases for $b$.

\begin{enumerate}

\item $b^2-2\leq n \leq 3b^2-16$. In this case $b\leq a$ because $a$ is the largest odd integer such that $a^2-2\leq n$. By Theorem \ref{thm:2}, $$M_{\frac 1 b} (n)\leq \frac {(b^2-2)(b^2-1)}{2}\leq \frac {(a^2-2)(a^2-1)}{2}.$$

\item $n\leq b^2-3$. By Theorem \ref{thm:rel} (relative bound), $M_{\frac 1 b} (n)\leq \frac {n(b^2-1)} {b^2-n}$. In this case, $b\geq a+2$ and among all possible values $b=a+2$ maximizes $\frac {n(b^2-1)} {b^2-n}$ so $M_{\frac 1 b} (n)\leq \frac {n((a+2)^2-1)} {(a+2)^2-n}$. With respect to $n$, this value is maximal when $n=(a+2)^2-3$. Therefore, $$M_{\frac 1 b} (n)\leq\frac {((a+2)^2-3)((a+2)^2-1)} {3}<\frac {(a^2-2)(a^2-1)}{2}$$ for all $a\geq 19$.

\item $n\geq 3b^2-15$. By Theorem \ref{thm:bound2},

$$M_{\frac 1 b}(n)\leq n\left(\frac 2 3 b^2+\frac 4 7\right)+2\leq n\left(\frac 2 3 \cdot\frac {n+15}{3}+\frac 4 7\right)+2=\frac {14n^2+246n+126}{63}\leq$$

$$\leq \frac {14((a+2)^2-3)^2+246((a+2)^2-3)+126}{63}<\frac {(a^2-2)(a^2-1)}{2}$$ for all $a\geq 19$.

\end{enumerate}
\end{proof}

\twodist*

\begin{proof}
We will use the following observation made in \cite{del77}. If $X=\{x_1,\ldots,x_N\}$ is a spherical two-distance set in $\mathbb{S}^{n-1}$ with scalar products $\alpha$ and $\beta$ such that $\alpha+\beta<0$, then there is an equiangular set of the same size in $\mathbb{S}^n$. For the construction of this set consider a unit vector $y$ orthogonal to all $x_i$. Then for any $t\in[0,1]$, the set of points $tx_i+\sqrt{1-t^2}y$ is a set in $\mathbb{S}^n$ with only two scalar products $t^2\alpha+(1-t^2)$ and $t^2\beta+(1-t^2)$. The sum of these scalar products is $t^2(\alpha+\beta)+2(1-t^2)$, which is negative at $t=1$ and positive at $t=0$. Hence there is $t$, where this sum is 0 and the set is equiangular.

Therefore, if $X$ is a two-distance set with $\alpha+\beta<0$ in $\mathbb{S}^{n-1}$, its size cannot be larger than $M(n+1)$. For $n\geq 358$, using Theorem \ref{thm:equiang} we get that, unless $n+1=a^2-2$ for some odd $a$, $M(n+1)\leq \frac {n(n+1)}{2}$. The statement of the theorem is true for $n<359$ due to computations in \cite{kin16} and the same observation (see \cite{yu16f} for details).

Finally, the case when $\alpha+\beta\geq 0$ is covered for all dimensions by Lemma \ref{lem:musin}.
\end{proof}

\section{Extremal equiangular sets}\label{sect:extremal}

In this section we analyze what equiangular sets attain the bound of Theorem \ref{thm:equiang}. In order to prove Theorem \ref{thm:extr1} we will need the definition of a strongly regular graph and its basic properties.

By a strongly regular graph with parameters $(v,k,\lambda,\mu)$ we mean a regular simple graph with $v$ vertices and degree $k$ such that every two connected vertices have exactly $\lambda$ common neighbors and every two non-connected vertices have exactly $\mu$ common neighbors. If $\Phi$ is an incidence matrix of such strongly regular graph, then, from the definition of a strongly regular graph, $\Phi^2=kI+\lambda\Phi+\mu(J-I-\Phi)$. Vector {\bf 1} of all 1's is an eigenvector of $\Phi$ with eigenvalue $k$ so from this matrix equality multiplied by {\bf 1} we get that parameters are not independent: $k^2=k+\lambda k + \mu(v-1-k)$. All other eigenvectors of $\Phi$ must be orthogonal to {\bf 1} so they are annihilated by $J$ and all other eigenvalues must satisfy $e^2=k+\lambda e + \mu(-1-e)$. Unless $\mu=0$ (the case of a disjoint union of complete graphs with $k+1$ vertices each), $e$ cannot be the same as $k$ so the eigenspace for $k$ is one-dimensional. Two other eigenvalues $e_1$ and $e_2$ may be found from the quadratic equation above. Dimensions of their respective eigenspaces must satisfy equation $1+d_1+d_2=v$ for the total dimension and $k+d_1e_1+d_2e_2=0$ for the trace of $\Phi$ which can allow one to find the dimensions precisely.

\extremal*

\begin{proof}
We continue with the same notation as in the proof of Theorem \ref{thm:2}. $X$ is an equiangular set in $\mathbb{S}^{n-1}$ with scalar products $\{\frac 1 a, -\frac 1 a\}$. An arbitrary point $x_0\in X$ was chosen and some points of $X$ were switched to their opposite so that $(y,x_0)=\frac 1 a$ for all $y\in X$. The set obtained this way was called $Z$ and we considered the derived set $Z_{x_0,\frac 1 a}$. This is a two-distance set in $\mathbb{S}^{n-2}$ with scalar products $\frac {1}{a+1}$, $\frac {-1}{a-1}$ and $|X|-1$ points. We denoted $|X|-1$ by $N$ and assumed that among ordered pairs of points in $Z_{x_0,\frac 1 a}$ there are $N_1$ with scalar product $\frac {1}{a+1}$ and $N_2$ with scalar product $\frac {-1}{a-1}$.

Consider a graph $G$ on $N$ vertices in $Z_{\frac 1 a, x_0}$ such that two vertices $x$ and $y$ form an edge if and only if $(x,y)=\frac 1 {a+1}$. Our goal is to show that this graph is a strongly regular graph.

At the moment from the proof of Theorem \ref{thm:2} we know that $N=\frac {(a^2-3)a^2}{2}$ and inequalities used in the proof must become equalities since the bound is attained. Therefore, numbers $N_1$ (twice the number of edges in $G$) and $N_2$ (twice the number of non-edges in $G$) may be found precisely: $N_1=\frac {(a+1)N(N-a)}{2a}$, $N_2=N(N-1)-N_1$.

The main observation under the proof is that we can choose any vertex of the set $X$ for $x_0$. Depending on a vertex taken we can get different graphs. It follows from the proof of Theorem \ref{thm:2} that all these graphs must have the same number of edges.

First, let us analyze how the graph is changed when the initial vertex for switching is changed. Assume we take another point $x$, associated with vertex $u$ in $G$, and make a derived set using $x$ instead of $x_0$. Denote by $N(u)$ the set of neighbors of $u$ in $G$ and denote by $N'(u)$ the set of non-neighbors. In the equiangular set $Z$, the set of points having scalar product $\frac 1 a$ with $x$ consists of $x_0$ and points corresponding to $N(u)$ and the set of points having scalar product $-\frac 1 a$ with $x$ consists of points corresponding to $N'(u)$. Therefore, we need to switch all points for $N'(u)$ to their opposites. After this operation, $x_0$ has scalar products of $\frac 1 a$ with $x$ and points for $N(u)$ and has scalar products of $-\frac 1 a$ with points for $N'(u)$. All scalar products between $N(u)$ and $N'(u)$ changed their signs.

When constructing a new graph $G'$ we should delete the vertex $u$, introduce a new vertex $u'$ corresponding to the point $x$ which is connected to all vertices in $N(u)$ and not connected to any vertex in $N'(u)$, and swap edges and non-edges between $N(u)$ and $N'(u)$. Note that the new vertex $u'$ has the same adjacencies as the old vertex $u$ so essentially $G'$ is the graph $G$ in which edges and non-edges between $N(u)$ and $N'(u)$ were swapped.

Denote by $k$ the degree of $u$. Just as $G$, the graph $G'$ must have exactly $N_1/2$ edges. The number of edges between $N(u)$ and $N'(u)$ may not change by swapping edges and non-edges so it must be exactly $k(N-1-k)/2$.

Denote $T:=Z_{\frac 1 a, x_0}$ and, similarly to the proof of Theorem \ref{thm:bound2}, consider two-distance derived sets $T_1=T_{\frac 1 {a+1}, x}$ and $T_2=T_{\frac {-1} {a-1}, x}$. $T_1$ is a two-distance set with $k$ points and scalar products $\frac 1 {a+2}$ and $-\frac {a+3}{(a-1)(a+2)}$. Denote by $t_1$ and $t_2$ the numbers of ordered pairs of points in $T_1$ such that their scalar products are $\frac 1 {a+2}$ and $-\frac {a+3}{(a-1)(a+2)}$ respectively. $T_2$ is a two-distance set with $N-k-1$ points and scalar products $-\frac 1 {a-2}$ and $\frac {a-3}{(a+1)(a-2)}$. Denote by $t'_1$ and $t'_2$ the numbers of ordered pairs of points in $T_2$ such that their scalar products are $\frac {a-3}{(a+1)(a-2)}$ and $-\frac 1 {a-2}$ respectively. We know that $N_1=2k+t_1+t'_1+k(N-k-1)$ (the degree of $u$ is counted twice and the number of pairs between $N(u)$ and $N'(u)$ is exactly $k(N-1-k)$ as we showed above).

Both for $T_1$ and $T_2$ we can find lower bounds on $t_1$ and $t'_1$ using positive semidefiniteness of their Gram matrices. For $T_1$ we have

$$t_1+t_2=k(k-1)\ \ \text{ and }\ \ k+\frac 1 {a+2} t_1 + -\frac {a+3}{(a-1)(a+2)} t_2\geq 0.$$

Therefore,

$$t_1\geq \frac {k^2(a+3)-k(a+1)^2}{2(a+1)}.$$

Similarly, for $T_2$ we have

$$t'_1+t'_2=(N-k-1)(N-k-2)\ \ \text{ and }\ \ (N-k-1)+\frac {a-3}{(a+1)(a-2)} t'_1 + (-\frac 1 {a-2})t'_2\geq 0.$$

Therefore,

$$t'_1\geq\frac{(a+1)(N-k-1)(N-k-a)}{2(a-1)}.$$

From these two inequalities and equality $N_1=\frac {(a+1)N(N-a)}{2a}$ we get that

$$\frac {(a+1)N(N-a)}{2a}\geq 2k +\frac {k^2(a+3)-k(a+1)^2}{2(a+1)} + \frac{(a+1)(N-k-1)(N-k-a)}{2(a-1)} + k(N-k-1).$$

Transforming this inequality, we obtain

$$0\geq \frac {2a}{a^2-1}\left(k-\frac {(N-a)(a+1)}{2a}\right)^2.$$

We conclude that $k$ must be $\frac {(N-a)(a+1)}{2a}$. The choice of point $x$ was arbitrary which means that all vertices in $G$ must have the same degree $k$ and, moreover, all degrees in $G'$ after swapping edges and non-edges between $N(u)$ and $N'(u)$ must be still the same. This means that from each vertex of $N'(u)$ there are exactly $N(u)/2=k/2$ edges to $N(u)$, i.e. any two disconnected vertices of $G$ have exactly $k/2$ common neighbors. Similarly, for any vertex of $N(u)$ there are exactly $N'(u)/2=(N-k-1)/2$ edges to $N'(u)$ which leaves $k-1-(N-k-1)/2=(3k-N-1)/2$ edges to other vertices of $N(u)$. This means that any two connected vertices of $G$ must have $(3k-N-1)/2$ common neighbors. Therefore, $G$ is a strongly regular graph with parameters $(N,k,\frac {3k-N-1}{2},\frac k 2)$.

For the remaining part of the proof we employ the fact that adjacency matrices $\Phi$ of graph $G$ and $J-I-\Phi$ of its complement have the same spectral structure. One of the eigenspaces is the one-dimensional space generated by {\bf 1} and the other two eigenspaces have dimensions we can find as described above. Using the notation from the beginning of this section, we find that $e_{1}=\frac {N-a}{2a}$ with multiplicity $d_1=\frac {N(a^2-1)}{N+a^2}$ and $e_2=-\frac {a+1}{2}$ with multiplicity $d_2=\frac {N^2-a^2}{N+a^2}$.

The Gram matrix of $Z$ is $I+\frac 1 {a+1}\Phi+\frac {-1}{a-1} (J-I-\Phi)$ and, as the benefit of the spectral structure of $\Phi$, we can find all eigenvalues of this matrix with their multiplicities:

$$1+\frac 1 {a+1} k+\frac {-1}{a-1}\left(N-k-1\right)=1+\frac 1 {a+1} \left(-\frac {a+1}{2}\right)-\frac {1}{a-1} \left(-1-\frac {a+1}{2}\right)=0$$

so 0 is an eigenvalue with multiplicity $d_2+1$;

$$1+\frac 1 {a+1} \frac {N-a}{2a}-\frac 1 {a-1} \left(-1-\frac {N-a}{2a}\right)=\frac {N+a^2}{a^2-1}$$

so this is an eigenvalue of multiplicity $d_1=\frac {N(a^2-1)}{N+a^2}$. The rank of the Gram matrix is then $\frac {N(a^2-1)}{N+a^2}=a^2-3$ so $Z$ belongs to the $(a^2-3)$-dimensional space. Therefore, the initial equiangular set $X$ is $(a^2-2)$-dimensional.
\end{proof}

Generally, Theorem \ref{thm:extr1} claims that, given $n\leq 3a^2 -16$, the Gerzon-type bound of Theorem \ref{thm:equiang} is attainable if and only if the actual Gerzon bound is attainable and, moreover, the extremal set is precisely the set where the Gerzon bound is attained. This may be used to slightly improve upper bounds for the maximum size of an equiangular set in some dimensions. For example, it is known \cite{ban04} that there is no equiangular set in $\mathbb{R}^{47}$ with 1128 points. Due to Theorem \ref{thm:extr1}, all equiangular sets with 1128 points in dimensions 48-75 must be embeddable in $\mathbb{R}^{47}$ and, therefore, such sets don't exist either. This means that $M(n)\leq 1127$ for all $n$ from 47 to 75. Similarly, $M(n)\leq 3159$ for $n\in[79,116]$ and $M(n)\leq 14027$ for $n\in[167, 221]$.

\begin{remark}
When proving that $G$ is strongly regular, we didn't use that $N$ is precisely $\frac{(a^2-3)a^2}{2}$ so this part essentially gives a new self-contained proof for the classification of equiangular tight frames and their connection to strongly regular graphs with parameters $(N,k,\frac {3k-N-1}{2},\frac k 2)$ (see, for instance, \cite{wal09}). For the classification of all spherical two-distance tight frames see \cite{bar15}.
\end{remark}

\extremaltwo*

\begin{proof}
As in the proof of Theorem \ref{thm:equiang}, assume that the bound is attained on the set with scalar products $\frac 1 b$ and $-\frac 1 b$. Of the three cases in the proof, only in the first it is possible to reach the bound. Hence $b^2-2\leq n\leq 3b^2-16$. Moreover, since $M_{\frac 1 b} (n)\leq \frac{(b^2-2)(b^2-1)}{2}$, the bound is attained only if $b=a$, which means that this set must have scalar products $\frac 1 a$ and $-\frac 1 a$. The second part of the corollary statement automatically follows from Theorem \ref{thm:extr1}.
\end{proof}

\section{Discussion}\label{sect:discuss}

In this section we would like to list several open questions and our conjectures as well as some general directions for research in this area.

\begin{enumerate}

\item We proved that maximum spherical $n$-dimensional two-distance sets, bar a set of exceptional dimensions, have $\frac{n(n+1)} 2$ points. However, the proof does not give a characterization of all maximal sets. The methods we employ can give a lot of information about two-distance sets with scalar products $\alpha+\beta\leq 0$. Unfortunately, the only result for the case $\alpha+\beta>0$ is Musin's lemma (Lemma \ref{lem:musin}) which doesn't provide any insight into extremal cases. We conjecture that for almost all dimensions (the density of exceptional dimensions from 1 to $N$ converges to 0 when $N\rightarrow \infty$) the extremal configurations are natural configurations of midpoints of the regular simplex. Moreover, we think that, for a fixed natural $c$, all two-distance spherical configurations with at least $\frac{n(n+1)} 2-c$ points are subsets of these natural configurations for almost all dimensions.

\item We conjecture that in each of the exceptional cases, $n=(2k+1)^2-3$, if there is no spherical two-distance set with $\frac{n(n+3)} 2$ vertices, then the maximal set has $\frac{n(n+1)} 2$ vertices. In other words, there are no intermediate two-distance sets: the extremal set is either a very special set bearing a lot of structural properties (tight spherical design, strongly regular graph, etc) or is simply the set of midpoints of the regular simplex.

\item The harmonic bound (Theorem \ref{thm:s-dist}) may not be attained for any $s\geq 3$. This follows from the non-existence of tight spherical $2s$-designes for $s\geq 3$ \cite{ban79}. However, no general bounds asymptotically improving the harmonic bound are known. We conjecture that the natural configuration with $\binom{n+1}{s}$ points is the maximal $s$-distance configuration for almost all $n$.

\item Due to the construction of de Caen \cite{cae00} and Gerzon's bound,

$$\frac 2 9 \leq \limsup_{n\rightarrow\infty} \frac {M(n)}{n^2}\leq \frac 1 2.$$

The exact value of this upper limit is an open question.

As one of the outcomes of this paper we know that there are no equiangular sets with scalar product $\frac 1 a$ of size $\sim n^2$ if $a^2\ll n$. We can adjust the question of finding the upper limit to the case of all $a$ and, using \cite{bal16} and Theorem \ref{thm:bound2}, find similar bounds for $M_{\frac 1 a}(n)$:

$$\frac 1 3 \leq \limsup_{n\rightarrow\infty} \max_{a} \frac {M_{\frac 1 a}(n)}{n a^2}\leq \frac 2 3.$$

The lower bound here is also obtained from de Caen's construction. It will be interesting to find this number exactly or improve existing bounds.

\item Theorem \ref{thm:extr1} and Corollary \ref{cor:extr2} show that in cases where the Gerzon-type bound is attained it is always given by a set in dimension $(2k+1)^2-2$. For each $n$, it seems reasonable to analyze equiangular sets of dimensions precisely $n$ (not smaller) and define $M^*(n)$ as the size of a maximal equiangular set of dimension precisely $n$ (not embeddable in $\mathbb{R}^{n-1}$). One of the consequences of Theorem \ref{thm:extr1} is that $M^*(n)$ is not monotonous. For example, $M^*(21)<M^*(22)$ and $M^*(23)<M^*(22)$. It will be very interesting to find lower bounds on $M^*(n)$. So far it is not even clear that $M^*(n)$ has a quadratic lower bound.

\end{enumerate}

\section{Acknowledgments}

We thank Alexander Barg for his invaluable comments on the text of the paper. Alexey Glazyrin was supported in part by NSF grant DMS-1400876.

\bibliographystyle{amsalpha}

\end{document}